\documentclass[a4paper, 12pt]{amsart}
\usepackage{amsmath, amsthm, amscd, amssymb, amsfonts, amsxtra, amssymb, latexsym}
\usepackage{enumerate}
\usepackage{verbatim}
\usepackage{float}
\usepackage{graphicx,epsfig}
\usepackage{tikz}

\usepackage{array}
\usepackage{hyperref}
\hypersetup{colorlinks = true,	allcolors  = blue}

\hoffset -1,6cm \voffset -.85cm 
\textwidth 16cm \textheight 23.8cm
\pagestyle{headings}
\setlength{\parskip}{0.15cm}
\setlength{\parindent}{0.5cm}
\setlength{\itemindent}{.125cm}
\setlength{\itemsep}{.35cm}
\newcommand{\G}{\Gamma}
\newcommand{\Z}{\mathbb{Z}}

\newcommand{\N}{\mathbb{N}}
\newcommand{\ff}{\mathbb{F}}
\newcommand{\Tr}{\mathrm{Tr}}

\newcommand{\sk}{\smallskip}

\newtheorem{thm}{Theorem}[section]
\newtheorem{prop}[thm]{Proposition}
\newtheorem{lem}[thm]{Lemma}
\newtheorem{coro}[thm]{Corollary}

\theoremstyle{definition}
\newtheorem{rem}[thm]{Remark}
\newtheorem{exam}[thm]{Example}

\theoremstyle{remark}

\setcounter{page}{1}

\usepackage{color}

\begin{document} \sloppy
\numberwithin{equation}{section}
\title[Connected components and non-bipartiteness of GP-graphs]{Connected components and non-bipartiteness \\ of generalized Paley graphs}
\author[R.A.\@ Podest\'a, D.E.\@ Videla]{Ricardo A.\@ Podest\'a, Denis E.\@ Videla}
\dedicatory{\today}
\keywords{Generalized Paley graphs, connected components, non-bipartite}
\thanks{2020 {\it Mathematics Subject Classification.} Primary 05C25;\, Secondary 05C50, 05C75, 05C20, 05C40.}
\thanks{Partially supported by CONICET and SECyT-UNC}

\address{Ricardo A.\@ Podest\'a, FaMAF -- CIEM (CONICET), Universidad Nacional de C\'ordoba, \newline
	Av.\@ Medina Allende 2144, Ciudad Universitaria, (5000) C\'ordoba, Argentina. 
	\newline {\it E-mail: podesta@famaf.unc.edu.ar}}
\address{Denis E.\@ Videla, FaMAF -- CIEM (CONICET), Universidad Nacional de C\'ordoba, \newline
	Av.\@ Medina Allende 2144, Ciudad Universitaria,  (5000) C\'ordoba, Argentina. 
	\newline {\it E-mail: devidela@famaf.unc.edu.ar}}

\begin{abstract}
In this work we consider the class of Cayley graphs known as generalized Paley graphs (GP-graphs for short) given by $\G(k,q) = Cay(\ff_q, \{x^k : x\in \ff_q^* \})$, where $\ff_q$ is a finite field with $q$ elements,
both in the directed and undirected case. Hence $q=p^m$ with $p$ prime, $m\in \N$ and one can assume that $k\mid q-1$.
We first give the connected components of an arbitrary GP-graph. We show that these components are smaller GP-graphs all isomorphic to each other (generalizing Lim and Praeger's result from 2009 to the directed case). We then characterize those GP-graphs which are disjoint unions of odd cycles.
Finally, we show that $\G(k,q)$ is non-bipartite except for the graphs $\G(2^{m-1},2^m)$, $m \in \N$, which are isomorphic to $K_2 \sqcup \cdots \sqcup K_2$, the disjoint union of $2^{m-1}$ copies of $K_2$.
\end{abstract}

\maketitle

\section{Introduction} 
Paley graphs form a classic family of Cayley graphs. For $q$ a prime power, the associated Paley graph 
	$$\mathcal{P}_q = Cay(\ff_q,(\ff_q^*)^2)$$
is the graph with the finite field of $q$ elements as vertex set and where two vertices $x$ and $y$ form an edge if $y-x$ is a non-zero square in the field. 
We will consider a natural generalization of Paley graphs and study two basic structural properties: the connected components and bipartiteness for these graphs.

In this work, $\ff_q$ will denote a finite field with $q$ elements, where $q=p^m$ with $p$ prime and $m \in \N$.
A \textit{generalized Paley graph} (GP-graph for short) is the Cayley graph
\begin{equation} \label{eq: Gkq}
	\G(k,q) = Cay(\ff_{q},R_k) \qquad \text{with} \qquad R_k=\{ x^{k} : x \in \ff_{q}^*\} = (\ff_q^*)^k.
\end{equation}
By definition, two vertices $u,v$ form a  oriented edge (or arrow, or arc) $\vec{uv}$ from $u$ to $v$ if $v-u \in R_k$ and thus $\G(k,q)$ is directed.
If $R_k$ is symmetric, i.e.\@ $R_k=-R_k$ (equivalently if $-1\in R_k$), $\vec{uv}$ is an arrow if and only if $\vec{vu}$ is an arrow. 
Hence, in this case, it is customary to think of both arrows (the oriented 2-cycle) as a single undirected edge $uv$ and to consider $\G(k,q)$ as undirected. Formally, we make the identification 
\begin{equation} \label{eq: ident}
	\{\{u,v\}\}=\{(u,v), (v,u)\}. 
\end{equation}

It is usual to assume that $k\mid q-1$ (and we do that from now on), since 
\begin{equation} \label{eq: G=G'}
	\G(k,q)=\G(\gcd(k,q-1),q).	
\end{equation}
Hence, given $q$, there are $\# \mathrm{div}(q-1)$ different 
GP-graphs defined over $\ff_q$, where div$(m)$ is the set of positive divisors of $m$. 

The graph $\G(k,q)$ is $n$-regular with 
	$n=\frac{q-1}{k}$ (a directed graph is $n$-regular if in each vertex the in-degree and the out-degree equals $n$) and has 
no loops, since $0\not \in R_k$. 
Also, it is well-known that  \label{conditions}
\begin{equation} \label{eq: C1C2}
\begin{aligned}
& \hspace{0cm} (\text{C1}). \:\: \text{$\G(k,q)$ is connected if and only if $ord_n(p) = m$, and} \\[1mm]
& \hspace{0cm} (\text{C2}). \:\: \text{$\G(k,q)$ is undirected if and only if $q$ is even or else $q$ is odd and $k \mid \tfrac{q-1}2$}. 
\end{aligned}
\end{equation}
Hence, $\G(k,q)$ is directed if $q$ is odd and $k \nmid \tfrac{q-1}2$. 
Note that if $\G(k,q)$ is directed and $k$ is even then $\G(\frac k2,q)$ is undirected, since $\frac k2 \mid \frac{q-1}2$. 
 In this case we have
\begin{equation} \label{eq: G dirs}
	\G(\tfrac k2,q) = \overset{_{\rightarrow}}{\G}(k,q) \cup \overset{_{\leftarrow}}{\G}(k,q)
\end{equation} 
where $\overset{_{\rightarrow}}{\G}(k,q)$ denotes the oriented graph $\G(k,q)$ 
with the given orientation and $\overset{_{\leftarrow}}{\G}(k,q)$ is the graph $\overset{_{\rightarrow}}{\G}(k,q)$ with the 
opposite orientation (i.e., the arrows reversed). Also, notice that $\overset{_{\rightarrow}}{\G}(k,q)$, $\overset{_{\leftarrow}}{\G}(k,q)$ are $n$-regular and that $\G(\tfrac k2,q)$ is $2n$-regular.

Besides connectedness, another basic and important property of graphs to study is bipartiteness. By a classic result of Brualdi, Harary and Miller, these properties are closely related. The bipartite double of a (di)graph $X$, denoted $B(X)$, is a bipartite double cover of $X$ that can be realized as the product $X \times K_2$. Brualdi et al.\@ proved that $B(X)$ is connected if and only if $X$ is connected and non-bipartite (\cite{BHM}). 
For instance, the four cycle $C_4$ is bipartite and $B(C_4)=C_4 \times K_2=C_4 \sqcup C_4$ is non-connected.

If $H$ is a subgraph of $G$ we denote it by $H\subset G$.
It is clear that if $k\mid \ell$ then $\G(\ell,q) \subset \G(k,q)$ 
if both graphs are directed or both graphs are undirected 
(also if one is directed and the other not, by \eqref{eq: ident}).
Similarly, if $a \mid m$, viewing $\ff_{p^a}$ as a subfield of $\ff_{p^m}$, 
we have that $\G(k,p^a) \subset \G(k,p^m)$. 
Thus, in general, if $k\mid \ell$ and $a \mid m$, then we have 
\begin{equation} \label{eq: GP-subgraphs}	
	\G(\ell,p^a) \subset \G(k,p^m).
\end{equation}

Note that $\G(1,q)$ is the complete graph in $q$-vertices $K_q$. If $q\equiv 1 \pmod 4$, then $\G(2,q)$ is the classic undirected Paley graph $\mathcal{P}_q$, which is a self-complementary strongly regular graph. If $q\equiv 3 \pmod 4$, then $\G(2,q)$ is the directed Paley graph $\vec{\mathcal{P}}_q$, which is a tournament (a directed graph with exactly one edge between each two vertices, in one of the two possible directions), hence transitive, acyclic and with one Hamiltonian path. 
Note that, by \eqref{eq: G dirs}, $\vec{\mathcal{P}}_q$ is an oriented clique.
The graphs $\G(3,q)$ and $\G(4,q)$ were studied before and have some interest 
(see \cite{PV3}, \cite{PV8}). 
There are other important families of GP-graphs, such as the Hamming GP-graphs (see \cite{LP}, \cite{PV4}), Cartesian decomposable GP-graphs (see \cite{PP}, \cite{PV4})
and the semiprimitive GP-graphs (see \cite{PV3}, \cite{PV4b}; and \cite{BWX} for a generalization).

The \textit{spectrum} of a graph $\G$, denoted $Spec(\G)$, is the set of eigenvalues of its adjacency matrix $A$ counted with multiplicities.
If $\Gamma$ has different eigenvalues $\lambda_0, \ldots, \lambda_t$ with multiplicities $m_0,\ldots,m_t$, we write as usual 
\begin{equation} \label{spec}
	Spec(\Gamma) = \{[\lambda_0]^{m_0}, \ldots, [\lambda_t]^{m_t}\}.
\end{equation}
It is well-known that the connectedness of a graph can be read from the spectrum.
If $\G$ is $n$-regular, then $n$ is the greatest eigenvalue, with multiplicity equal to the number of connected components of $\G$. 
Thus, $\G$ is connected if and only if $n$ has multiplicity $1$.
Similarly, for $n$-regular digraphs, i.e.\@ those directed graphs such that any vertex has the same in-degree and out-degree equal to $n$, $\G$ is strongly connected if and only if $n$ has multiplicity $1$.
Recently, we have expressed the spectrum of a general GP-graph $\G(k,q)$ in terms of cyclotomic Gaussian periods (see Theorem 2.1 in \cite{PV3}).

\subsubsection*{Outline and results} 
In Section \ref{sec:2} we study the connected components of the GP-graphs. In Theorem \ref{thm: components} we show that when the graph $\G(k,p^m)$ is not (strongly) connected, it is the disjoint union of $p^{m-a}$ isomorphic copies of a single GP-graph $\G(k_a,p^a)$ where 	
	$$k_a=\tfrac{p^a-1}{n}, \qquad n=\tfrac{p^m-1}{k} \qquad \text{and} \qquad a=ord_n(p),$$ 
thus generalizing the result of Lim and Praeger for undirected graphs (\cite{LP}) to the general case. Namely, if $\sqcup$ stands for disjoint union of graphs, we have  
	$$\G(k,q) = \bigcup_{h \in \ff_q} \G[F_a+h]  \simeq \G(k_a,p^a) \sqcup \cdots \sqcup \G(k_a,p^a)$$   
with $p^{m-a}$ terms, where $\G[F_a+h]$ is the induced subgraph of $\G(k,q)$ by the coset $F_a+h$ of the subfield $F_a = \{x \in \ff_q : x^{p^a}=x\}$.
In Examples \ref{ex: cliques}--\ref{ex: cliques Kpt} we give GP-graphs whose connected components are cliques and in Example \ref{ex: qml} we give two connected families of GP-graphs (which are strongly regular).

In Section \ref{sec:3}, using Theorem \ref{thm: components}, 
we characterize those GP-graphs which are disjoint unions of $p$-cycles with $p$ an odd prime. 
In Proposition \ref{prop: cycle decompositions} we show that 
	$$\G(\tfrac{q-1}2,q) \simeq C_p \sqcup \cdots \sqcup C_p \qquad \text{and} \qquad 
\G(q-1,q) \simeq \vec{C}_p \sqcup \cdots \sqcup \vec{C}_p$$ 
($p^{m-1}$ times), where $C_p$ and $\vec{C}_p$ (the $p$-cycle and the oriented 
$p$-cycle respectively) are (isomorphic to) the connected components. Conversely, if a GP-graph is a disjoint union of $p$-cycles, then it must be one of the graphs $\G(k,q)$ with $k \in \{\frac{q-1}2,q\}$ and $q=p^m$ for some $m\in\N$. 
{In Examples \ref{exam: GP3s}--\ref{exam: GP7s} we study the connectedness (by giving the decomposition into connected components) of the GP-graphs $\G(k,p^m)$ for $p=3,5,7$ and small values of $m$.

Finally, in Section \ref{sec:5} we show that the graphs $\G(k,q)$ are generically non-bipartite. We first prove in Proposition \ref{prop caract. GP bip} that the only connected GP-graph which is bipartite is $\G(1,2)=K_2$. 
Then, combining Proposition~\ref{prop caract. GP bip} with Theorem~\ref{thm: components}, we prove that the GP-graphs $\G(k,q)$ are non-bipartite, with the only exception of the graphs 
	$$\G(2^m-1,2^m) \simeq K_2 \sqcup \cdots \sqcup K_2 \qquad (2^{m-1} \text{ copies})$$ 
with $m \in \N$ (see Theorem \ref{teo: GPbipartito}).
Using this result on non-bipartiteness of GP-graphs together with the already mentioned result of Brualdi et al, we completely characterize those connected bipartite doubles $B(\G(k,q))$ in Corollary \ref{coro: double}.
To conclude, in Example~\ref{exam: GP2s} we study the binary GP-graphs $\G(k,2^m)$ for $1\le m \le 8$,
giving their connected component decompositions.

\section{The connected components of the graphs $\G(k,q)$} \label{sec:2}
Suppose $q=p^m$, with $p$ prime and $m\in \N$, and let $k \in \N$ be such that $k\mid q-1$.
It is well-known that the generalized Paley graph $\G(k,q)$ is connected if and only if $n=\frac{q-1}{k}$ is a primitive divisor of $p^{m}-1$, denoted 
\begin{equation} \label{eq: prim div}
	n \dagger p^m-1,
\end{equation}	
(i.e.\@ $n\mid p^m-1$ but $n \nmid p^a-1$ for any $a<m$). 
In other words, $m$ is the order of $p$ modulo $n$, that is the least positive integer $t$ such that $p^t \equiv 1 \pmod n$, denoted 
\begin{equation} \label{eq: ord p}	
	ord_{n}(p)=m.
\end{equation}
We will use both notations alternatively in the paper (one emphasizes the relation between $n$ and $q$, 
when written $\frac{q-1}k \dagger q-1$, and the other between $p$ and $n$).

The fact that $\G(k,q)$ is connected is equivalent to the fact that the Waring number $g(k,q)$ exists. 
The \textit{Waring number} $g(k,q)$ is the minimum $s$ such that every $b \in \ff_q$ can be written as a sum of $s$ $k$-powers, i.e.\@ for each $b\in \ff_q$ there exist  $x_{1}, \ldots, x_{s} \in \ff_q$ such that 
	$b=x_{1}^k + \cdots + x_{s}^k$. 
In this case, the Waring number is given by the diameter of the graph (see for instance \cite{PV6} and \cite{PV7}), that is
\begin{equation} \label{eq: gkq}
	g(k,q) = \text{diam}(\G(k,q)).
\end{equation}

Notice that, if in the above notations we put $a:=ord_n(p)$, then $a\mid m$. Summing up, $\G(k,q)$ is connected if $a=m$ and not connected if $a<m$. As a result, $\G(k,p)$ is always connected for any $k\mid p-1$ and any $p$ prime 
(i.e.\@ $g(k,p)$ always exists).

Here we study the connected components of the graphs $\G=\G(k,q)$ in the most general case (i.e.\@ $\G$ directed or not). 
This was previously studied by Lim and Praeger in 2009 (\cite{LP}) in the undirected case where they showed that if $\G(k,q)$ is not connected then all their connected components are isomorphic to a smaller GP-graph defined over a subfield $\ff_{q'}$ of $\ff_{q}$. 
As a consequence, they obtained a decomposition for the automorphism group $\textrm{Aut}(\G)$ of $\G$.

Next, we extend this result to the general case, that is for $\G$ either directed or undirected.
We need first to recall the definition of induced graphs. Given $G=(V,E)$ any graph with vertex set $V$, edge set $E$, and $S\subseteq V$ any subset of vertices of $G$, 
the \textit{induced subgraph} in $G$ by $S$, denoted 
	$$G[S]=(S,E_S),$$ 
is the graph whose vertex set is $S$ and whose edge set consists of all of the edges in 
$E$ that have both endpoints in $S$. That is, for any two vertices $u,v \in S$, $u$ and $v$ are adjacent in $G[S]$ 
if and only if they are adjacent in $G$.

\begin{thm} \label{thm: components}
Let $q=p^m$ be a prime power with $m\in \N$, $k \in \N$ such that $k\mid q-1$ and put $n=\frac{q-1}{k}$. 
Let $a=ord_n(p)$ and let $k_a = \frac{p^{a}-1}n \in\N$. 
Then, $a \mid m$, $k_a \mid k$, and $\G=\G(k,q)$ has exactly $p^{m-a}$ (strongly) connected components, all mutually isomorphic to $\G_a = \G(k_a,p^a)$. That is, we have 
\begin{equation} \label{eq: comp cxas}
	\G(k,q) = \bigcup_{h \in \ff_q} \G[F_a+h] \simeq \underbrace{\G(k_a,p^a) \sqcup \cdots \sqcup \G(k_a,p^a)}_{\text{$p^{m-a}$ times}},
\end{equation} 
where $F_a = \{x \in \ff_q : x^{p^a}=x\}$.
Hence, the automorphism group of $\G$ is given by 
\begin{equation} \label{eq: Aut(G)}
	{\rm Aut}(\G) \simeq {\rm Aut}(\G_a) \wr \mathbb{S}_{p^{m-a}}
\end{equation} 
where $\wr$ denotes the wreath product of groups, $\mathbb{S}_{p^{m-a}}$ is the symmetric group on $p^{m-a}$ letters, and the spectrum of $\G$ is given by 
\begin{equation} \label{eq: specmults}
	Spec(\G) = [Spec(\G_a)]^{p^{m-a}}	
\end{equation}
meaning that the eigenvalues are the same, i.e.\@ $\{ \lambda_\G \} = \{ \lambda_{\G_a}\}$, but the multiplicities satisfy $m(\lambda_{\G}) = p^{m-a} m(\lambda_{\G_a})$.
\end{thm}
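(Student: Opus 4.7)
The plan is to exhibit the $p^{m-a}$ connected components explicitly as the cosets of the subfield $F_a$ inside $\ff_q$, and then read off the automorphism group and spectrum from the general fact that $\G$ is a disjoint union of isomorphic connected pieces.

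First I would dispatch the arithmetic: since $n\mid p^m-1$ and $a=ord_n(p)$, the defining property of the order forces $a\mid m$, which in turn gives $p^a-1\mid p^m-1$, hence $k_a=(p^a-1)/n$ is an integer dividing $k=(p^m-1)/n$. The subfield $F_a$ is an additive $\ff_p$-subspace of $\ff_q$ of index $p^{m-a}$, so $\ff_q$ is partitioned into $p^{m-a}$ cosets $F_a+h$. The key step is to observe that the connection set $R_k=(\ff_q^*)^k$ is the \emph{unique} subgroup of $\ff_q^*$ of order $n$; since $n\mid p^a-1=|F_a^*|$, this subgroup is contained in the unique subgroup of $\ff_q^*$ of order $p^a-1$, which is $F_a^*\subset F_a$. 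Moreover, $F_a^*$ being cyclic of order $nk_a$ gives $R_k=(F_a^*)^{k_a}$, so the induced subgraph $\G[F_a]$ is precisely $\G(k_a,p^a)$, which is (strongly) connected by the minimality of $a=ord_n(p)$ via condition (C1).

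Once $R_k\subseteq F_a$ is established, the decomposition is immediate: any arc $\vec{uv}$ satisfies $v-u\in R_k\subseteq F_a$, so $u$ and $v$ lie in the same coset, and no arcs cross between cosets. Additive translation $\tau_h(x)=x+h$ is a Cayley-graph automorphism of $\G(k,q)$, and its restriction yields graph isomorphisms $\G[F_a]\xrightarrow{\sim}\G[F_a+h]$ for every $h\in\ff_q$. Since each piece is connected and they are mutually disjoint, the cosets are exactly the $p^{m-a}$ connected components, all isomorphic to $\G_a=\G(k_a,p^a)$, proving \eqref{eq: comp cxas}. The formula \eqref{eq: Aut(G)} is then the standard wreath-product description of the automorphism group of a disjoint union of $p^{m-a}$ isomorphic connected graphs, and \eqref{eq: specmults} follows because, after reordering vertices coset by coset, the adjacency matrix becomes block-diagonal with $p^{m-a}$ identical blocks $A(\G_a)$.

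The main obstacle is psychological rather than technical: recognizing that the multiplicative object $R_k=(\ff_q^*)^k$ is forced, by the order of $p$ modulo $n$, to lie inside the additive subgroup $F_a$. Once this containment is in hand, the proof is a direct transcription of the Lim--Praeger argument from the undirected to the directed setting, since translations are Cayley-graph automorphisms regardless of whether one regards arcs as oriented or as undirected pairs via the identification \eqref{eq: ident}, and the induced-subgraph and spectral arguments are orientation-blind.
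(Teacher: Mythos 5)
Your proposal is correct and follows essentially the same route as the paper: establish $R_k\subseteq F_a$ with $R_k$ equal to the $k_a$-th powers of $F_a^*$, deduce that $\G[F_a]\simeq\G(k_a,p^a)$ is connected by minimality of $a$, use translations to identify the cosets $F_a+h$ as the components, and then read off the wreath-product automorphism group and the multiplied spectrum from the disjoint-union structure. The only difference is cosmetic: you get $R_k=(F_a^*)^{k_a}$ from uniqueness of subgroups of the cyclic group $\ff_q^*$, whereas the paper computes explicitly with the generator $\alpha=\omega^{(q-1)/(p^a-1)}$; both arguments are sound.
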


\begin{proof}
Since $n \mid p^{m}-1$ and $n\mid p^a-1$ with $a=ord_{n}(p)$, then $p^a-1 \mid p^m-1$ and hence we have that $a\mid m$.
Also, $k_a \mid k$ since $k=k_a \frac{p^m-1}{p^a-1}$.

Now, given $1\le a \le m$ with $a\mid m$, it is well-known that there is an isomorphic copy of $\ff_{p^a}$ inside $\ff_{q}=\ff_{p^m}$ given by the fixed elements of the Frobenius automorphism 
	$$\varphi_{a}(x)=x^{p^a}.$$
More precisely, we have
	$$F_a := \{ x\in \ff_{q}: \varphi_{a}(x)=x\}\cong \ff_{p^a}.$$

Furthermore, notice that if $\omega$ is a generator of $\ff_{q}^*$ and 
	$$\alpha= \omega^{\frac{q-1}{p^a-1}},$$ 
then $F_a = \langle \alpha \rangle$. 
Indeed,
	$$\varphi_{a}(\alpha) = \omega^{\frac{(q-1)p^a}{p^{a}-1}} = \omega^{\frac{(q-1)(p^a-1+1)}{p^{a}-1}} = \omega^{q-1}\omega^{\frac{q-1}{p^a-1}} = \alpha$$
and hence $\alpha \in F_a$ and $\langle \alpha \rangle \subset F_a$. Now, 
if $z\in F_a$ and $z\neq 0$, then $\varphi_{a}(z)=z$ and we can put $z=\omega^{\ell}$ for some $\ell$. In this way, we obtain that 
	$z^{p^a-1} = \omega^{\ell(p^{a}-1)}=1$.	
Thus, since $ord(\omega)=q-1$, there exist $t\in \mathbb{N}$ such that $\ell(p^{a}-1)=t(q-1)$ and hence 
	$z=\omega^{\ell} = \omega^{\frac{q-1}{p^{a}-1}t} = \alpha^{t}$.
Therefore, $F_{a} \subset \langle \alpha \rangle$ and hence 
	$$F_{a}=\langle \alpha \rangle.$$ 
	
The result will follow from three claims. 
In the first one we express the connection set $R_k$ of $\G(k,q)$ in terms of $F_a$.

\sk 	
\noindent \textit{$\bullet$ Claim $1$}: $R_k = \{w^{k_a} \in F_a : w\in F_{a}^* \}$.
\sk 	
	
Notice that $R_{k} = \langle \omega^k \rangle$ with $\omega$ a generator of $\ff_{q}^*$.
Since $k = \tfrac{q-1}{p^a-1}k_a$, 
we have that $R_{k}$ is generated by $\alpha^{k_a}$. 
Thus, we obtain that $R_{k}\subseteq \{w^{k_a} \in F_a : w\in F_{a}^* \}$. 
Finally, since 
	$$|R_{k}| = \tfrac{q-1}{k} = \tfrac{p^a-1}{k_a} = |\{w^{k_a} \in F_a : w\in F_{a}^* \}|,$$ 
we must have that $R_k = \{w^{k_a} \in F_a : w\in F_{a}^* \}$, as we claimed. \hfill $\lozenge$

\sk

Now, put $\G=\G(k,q)$. 
In the second claim we show that given a vertex $x$ of $\G(k,q)$, the set of neighbors of $x$, 
	$$N_{\G}(x) = \{ y \in \ff_{q}^* : xy \in E(\G)) \} = \{y \in \ff_q^* : y-x \in R_k\},$$ 
where $E(\G)$ is the set of (directed) edges of $\G$, is a subset of $F_a$.

\sk 
\noindent \textit{$\bullet$ Claim $2$}: $N_{\G}(x) \subseteq F_a$ for all $x\in F_a$.
\sk 
	
Let $x\in F_a$ and $y \in N_\G(x)$. We have to show that $y \in F_a$. Since $xy \in E(\G)$, there is $z \in R_k$ such that $y-x=z$, that is $y=x+z$. 
	By Claim $1$, $R_k\subseteq F_a$ and since $F_a$ is closed under addition (being a field), we have that $y=x+z\in F_a$ and so
	$N_{\G}(x)\subseteq F_a$. 
	\hfill $\lozenge$
	
\sk
 
Now, Claims 1 and 2 together imply the following:

\sk 	
\noindent \textit{$\bullet$ Claim $3$}: The induced subgraph of $\G$ by $F_a$, 
$\G[F_a] = (F_a,E(\G)_{F_a})$, 
is isomorphic to $\G(k_a,p^a) = Cay(\ff_{p^a},R_{k_a})$.
\sk 

First note that 
	$$\G[F_a]=Cay(F_a, R_k)$$ 
since $xy$ is an edge in $\G[F_a]$ if and only if $xy$ is an edge in $\G$, which happens if and only if $y-x \in R_k$. 
Hence, it is enough to show that there is a group isomorphism from $\ff_{p^a}$ to $F_a$ sending $R_{k_a}$ to $R_k$.
Let $\omega_a$ be a primitive element of $\ff_{p^a}$ and consider the field monomorphism 
	$$\psi_\alpha :\ff_{p^a} \hookrightarrow \ff_q$$  
defined by $\psi_\alpha(\omega_a) = \alpha$ and $\psi_\alpha(0)=0.$
Clearly, in this case we have that 
	$$\psi_\alpha(\ff_{p^a}) = F_{a} \qquad \text{and} \qquad \psi_\alpha(\langle \omega_a^{k_a}\rangle) = \langle \alpha^{k_a} \rangle =R_k,$$ 
by Claim 1.
Moreover, Claim $2$ implies that the regularity degree of all of the vertices in the induced subgraph of $\G[F_a]$ 
is exactly 
	$$|R_k| = |\{z^{k_a} \in \ff_{p^a} : z \in \ff_{p^a}^*\}|.$$
In this way, we see that $\psi_\alpha$ is surjective and hence a group isomorphism.
Therefore, 
	$$\G[F_a] \simeq \G(k_a,p^a),$$ 
as we claimed.
\hfill $\lozenge$

Since $a$ is minimal with the property $\frac{q-1}{k} \mid p^{a}-1$ and $\frac{q-1}{k} = \frac{p^a-1}{k_a}$, 
then $\frac{p^a-1}{k_a}$ is a primitive divisor of $p^{a}-1$ and hence $\G(k_a,p^a)$ is connected.
Hence $\G[F_a]$ is the connected component of $\G(k,q)$ containing the vertex $0$. 
The rest of the connected components are exactly 
	$\G[F_a + h]$ with $h\in \ff_q$ 
(i.e.\@ the induced subgraphs of $\G(k,q)$ generated by the cosets of $F_a$ in $\ff_{q}$).
Therefore, we obtain \eqref{eq: comp cxas} as desired.

Now, the expression for the automorphism group in \eqref{eq: Aut(G)} is automatic from the decomposition 
\eqref{eq: comp cxas} and the definition of wreath product. In fact, for the undirected case it was first proved in \cite{LP}. Since an automorphism of a directed graph is an automorphism preserving directed edges, the wreath product decomposition also holds for directed graphs.

Finally, since the spectrum of the disjoint union of graphs is the union of the spectrum of each of its components as multisets, the expression \eqref{eq: specmults} holds.
\end{proof}

We now make some comments on the previous result and some consequences.
\begin{rem} \label{rem: 1}
($i$) In (2) of Theorem 2.2 in \cite{LP}, Lim and Praeger showed the same statement as the previous theorem for the undirected case in a quite different way. Moreover, although not explicitly stated, their proof is also valid in the directed case, since it is purely algebraic. Nevertheless, for completeness, we gave an alternative proof.

\noindent ($ii$)
A graph is directed if and only if some of its connected components are directed. Notice that, by the decomposition \eqref{eq: comp cxas}, the GP-graph $\G(k,p^m)$ is directed if and only if $\G(k_a,p^a)$ is directed. Thus, by \eqref{eq: C1C2}, if $p$ is odd, to see if the graph is directed it is enough to check whether if $k \nmid \frac{p^a-1}2$ instead of checking the condition $k \nmid \frac{p^m-1}2$.

\noindent ($iii$)
Clearly, by the decomposition \eqref{eq: comp cxas}, those invariants of $\G(k,p^m)$ not depending on connectivities or transitivities of the graph (such as the girth, the chromatic number, the clique number, etc) can be computed from the smaller graph $\G(k_a,p^a)$. 

\noindent ($iv$)
Previously, in Lemma 4.1 of \cite{PV4b}, we have proved (in other notations) that for any $m$ even we have
\begin{equation} \label{eq: q^mKq^m}
	\G(q^{\frac m2}+1,q^m)=q^{\frac m2} K_{q^{\frac m2}}
\end{equation}
where $q=p^s$ and $s\in \N$. Notice that, taking $s=1$, this is just Theorem \ref{thm: components} with $a=\frac{m}2$.
\end{rem}
 
\noindent \textit{Notation:} Sometimes, for short, we will write $tH$ for the disjoint union of 
$t$ (isomorphic) copies of a graph $H$, although we prefer the long notation which is more visual. 
That is, if $H_1, \ldots, H_t$ are graphs isomorphic to $H$, then
\begin{equation} \label{eq: notation}
	tH := \underbrace{H \sqcup \cdots \sqcup H}_{\text{$t$-times}} \simeq H_1 \sqcup \cdots \sqcup H_t .
\end{equation}
Abusing the notation, we will use $=$ instead of $\simeq$.

We now give some basic examples of GP-graphs that can be decomposed as disjoint unions of cliques. 
In the first example the cliques are $K_2$.

\begin{exam} \label{ex: cliques}
($i$) Consider the GP-graph $\G(7,8)=Cay(\ff_8, \{x^7 : x \in \ff_8^*\})$. 
It is clear that $n=a=k_a=1$ in the notations of Theorem \ref{thm: components} and hence we have 
	$$\G(7,8) \simeq \G(1,2) \sqcup \G(1,2) \sqcup \G(1,2) \sqcup \G(1,2) =  4K_2.$$ 
Taking 
	$$\ff_8 = \ff_2[x]/(x^3+x+1) = \{0,1,\alpha,\alpha+1,\alpha^2,\alpha^2+1, \alpha^2+\alpha, \alpha^2+\alpha+1\}$$ 
with $\alpha^3+\alpha+1=0$, it is easy to see that the seventh powers of all these elements are $0$ and $1$ and hence the vertices 		
$\{0,1\}$, $\{\alpha,\alpha+1\}$, $\{\alpha^2,\alpha^2+1\}$ and $\{\alpha^2+\alpha,\alpha^2+\alpha+1\}$
form the four 2-cliques in $\G(7,8)$. 
They are precisely the induced graphs $\G[F_1]$, $\G[F_1+\alpha]$, $\G[F_1+\alpha^2]$, $\G[F_1+(\alpha^2+\alpha)]$, where $F_1=\{x\in \ff_8: x^2=x\}=\{0,1\}=\ff_2 \subset \ff_8$.

\noindent ($ii$) 
In general, for the GP-graph $\G(2^m-1,2^m) = Cay(\ff_{2^m}, \{x^{2^m-1} : x \in \ff_{2^m}^*\})$ we have that $n=a=k_a=1$ and hence 
\begin{equation} \label{eq: K2s}
	\G(2^m-1,2^m) \simeq 2^{m-1} K_2. 
\end{equation}
The 2-cliques are $\{\beta, \beta +1\}$ for any $\beta \in \ff_{2^m}$.
The automorphism group is 
	$$ {\rm Aut}(\G(2^m-1,2^m))=  \mathbb{Z}_2 \wr \mathbb{S}_{2^{m-1}}.$$
Also, by \eqref{eq: specmults} and \eqref{eq: K2s} we have that 
	$$ Spec(\G(2^m-1,2^m)) = [Spec(K_2)]^{2^{m-1}} = \big\{ [1]^{2^{m-1}}, [-1]^{2^{m-1}} \big\} $$
since the spectrum of the complete graph $K_\ell$ is 
\begin{equation} \label{eq: Spec Kt}
	Spec(K_\ell) = \{ [\ell-1]^1, [-1]^{\ell-1}\}
\end{equation}
and hence 
$Spec(K_2)=\{ [1]^1, [-1]^1 \}$.
\hfill $\lozenge$
\end{exam}

In the next example the cliques are $K_3=C_3$.

\begin{exam} \label{ex: cliquesK3}
($i$) Consider the GP-graph $\G(4,9)=Cay(\ff_9, \{x^4:x\in \ff_9^*\})$. Since $k=4$ and $q=9$ we have $n=2$, $a=1$ and $k_a=1$ in the notations of Theorem \ref{thm: components} and hence 
	$$\G(4,9) \simeq \G(1,3) \sqcup \G(1,3) \sqcup \G(1,3) = 3C_3. $$ 
Taking 
\begin{equation} \label{eq: F9}
	\ff_9 = \ff_3[x]/(x^2+1) = \{0,1,2,\alpha,\alpha+1,\alpha+2,2\alpha,2\alpha+1,2\alpha+2\}
\end{equation}	
with $\alpha^2+1=0$, it is easy to see that the fourth powers of all these elements are $0,1,2$ and hence the vertices 
$\{0,1,2\}$, $\{\alpha,\alpha+1,\alpha+2\}$ and $\{2\alpha,2\alpha+1,2\alpha+2\}$
form the 3-cycles in $\G(4,9)$. They are precisely the induced graphs $\G[F_1]$, $\G[F_1+\alpha]$, $\G[F_1+2\alpha]$, where $F_1=\{x\in \ff_9: x^3=x\}=\{0,1,2\}=\ff_3 \subset \ff_9$.

\noindent ($ii$)
In general, since $n=2$, $a=1$ and $k_a=1$, by Theorem \ref{thm: components} we have that 
\begin{equation*} \label{eq: K3s}
	\G(\tfrac{3^m-1}2,3^m) \simeq 3^{m-1}C_3. 
\end{equation*}
The automorphism group is 
	$$ {\rm Aut}(\G(\tfrac{3^m-1}2,3^m))=  \mathbb{D}_3 \wr \mathbb{S}_{3^{m-1}},$$
where $\mathbb{D}_3= \mathbb{S}_3$ is the dihedral group of 6 elements.
Furthermore, by \eqref{eq: specmults} and \eqref{eq: K2s} again, we have that 
	$$ Spec(\G(\tfrac{3^m-1}2,3^m)) = [Spec(C_3)]^{3^{m-1}} = \big\{ [2]^{3^{m-1}}, [-1]^{2\cdot 3^{m-1}} \big\} $$
since the spectrum of the $3$-cycle $C_3$ is $Spec(C_3) = \{ [2]^1, [-1]^{2}\}$.
\hfill $\lozenge$
\end{exam}

Finally, we give an example with cliques $K_{p^t}$ with $p$ prime. 
\begin{exam} \label{ex: cliques Kpt}
Now, consider $\G= \G(p^{\frac m2}+1,p^m) = Cay(\ff_{p^m}, \{x^{p^{m/2}+1} : x \in \ff_{p^m}^*\})$ for $m$ even. 
If we put $m=2t$ with $t\in \N$, by Remark \ref{rem: 1} $(iv)$ we have that  
\begin{equation} \label{eq: ptKpt}
	\G(p^t+1,p^{2t}) = p^t K_{p^t}.
\end{equation}
In this case we have $n=p^t-1$, $a=t$ and $k_a=1$. Thus, by Theorem \ref{thm: components}, we have 
$\G_a = \G(1,p^t) = K_{p^t}$ and hence the automorphism group and spectrum are given by 
\begin{gather*}
{\rm Aut}(\G(p^t+1, p^{2t})) = {\rm Aut}(K_{p^t}) \wr \mathbb{S}_{p^t} = \mathbb{S}_{p^t}\wr \mathbb{S}_{p^t}, \\
Spec(\G(p^t+1,p^{2t})) = [Spec(K_{p^t})]^{p^t} = \big\{ [p^t-1]^{p^t}, [-1]^{p^t(p^t-1)} \big\},
\end{gather*}	
where we have used \eqref{eq: Spec Kt}.	
\hfill $\lozenge$
\end{exam}

In the notations of Theorem \ref{thm: components}, the graph $\G(k,q)$ is connected if and only if $a=m$. 
We now give two families of connected GP-graphs, which turn out to be strongly regular. 
A \textit{strongly regular graph} with parameters $v,k,e,d$, denoted
	$$srg(v,k,e,d),$$
is a $k$-regular graph of order $v$ such that any pair of adjacent vertices has $e$ neighbors in common and any pair of non-adjacent vertices has $d$ common neighbors. The parameters satisfy 
\begin{equation} \label{eq: srg cond}
	(v-k-1)d=k(k-e-1).	
\end{equation}
If $d\ne 0$, the graph is connected with 3 eigenvalues and has diameter 2.

\begin{exam} \label{ex: qml}
($i$) In \cite{PV4b}, we studied the family of GP-graphs $\G(q^\ell+1, q^m)$ with $\ell \mid m$, where $q=p^s$ and $p$ prime. 
For $\frac{m}{\ell}$ even we get non-trivial graphs (if $\frac{m}{\ell}$ is odd we get either the complete graph $K_q$ or the Paley graph $\mathcal{P}_q$). All these graphs are connected (for $\ell \ne \frac m2$ if $m$ is even) and strongly regular (hence with 3 different eigenvalues). The spectra of $K_q$ and $P_q$ are well-known and their parameters 
as strongly regular graph are 
	$$K_q =srg(q,q-1,q-2,0) \qquad \text{and} \qquad \mathcal{P}_q = srg(q, \tfrac{q-1}2, \tfrac{q-5}4, \tfrac{q-1}4).$$

For the graphs with $\frac{m}{\ell}$ even, their eigenvalues are given in Theorem  3.5 in \cite{PV4b} and their parameters as strongly regular graph in Theorem 5.2 in \cite{PV4b}.  
In the present notation, for any pair of numbers $\ell, t \in \N$ with $t>1$, we have the graphs
\begin{equation} \label{eq: GPqml}
	\G(p^\ell+1,p^m) = srg(p^m, \tfrac{p^m-1}{p^\ell+1}, \tfrac{p^m -(-1)^t p^{\ell(t+1)} (p^\ell-1) -3p^\ell-2}{(p^\ell+1)^2},
	\tfrac{p^m +(-1)^t p^{\ell t} (p^\ell-1) -p^\ell}{(p^\ell+1)^2})
\end{equation}
where $m=2\ell t$ and $\ell \ne \frac m2$.  
This example complements the study of the graphs considered in Example \ref{ex: cliques Kpt} where $\ell=\frac m2$.

\noindent ($ii$) 
For $q=p^m$ with $m \in \N$ and $p$ an odd prime we have  
\begin{equation} \label{eq: rook}
	\G(\tfrac{q+1}2, q^2) = L_{q,q} = srg(q^2,2(q-1),q-2,2)
\end{equation}
where $L_{q,q}$ is the lattice (or rook's graph). In Proposition 5.7 in \cite{PV3} we showed the first equality; the second one is well-known (also that $\G(\tfrac{q+1}2, q^2)$ is the only Hamming GP-graph which is semiprimitive).
\hfill $\lozenge$
\end{exam}

\section{GP-graphs as disjoint unions of cycles} \label{sec:3}
We now characterize those GP-graphs which are disjoint unions of cycles (directed or not).
This will be used in the next section to classify bipartiteness of GP-graphs.

We first characterize those connected GP-graphs which are cycles of prime length.
\begin{lem} \label{lema: cycles}
If $p$ is an odd prime then we have  
	\begin{equation} \label{eq: Cp and Cp*}
	\G(\tfrac{p-1}2,p) \simeq C_p \qquad \text{and} \qquad \G(p-1,p) \simeq \vec{C}_p,
	\end{equation}	
where $C_p$ denotes the $p$-cycle graph and $\vec{C}_p$ the oriented 
$p$-cycle graph.
\end{lem}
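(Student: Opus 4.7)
The plan is to directly identify the connection set $R_k = (\ff_p^*)^k$ in each of the two cases, and then to read off the resulting Cayley graph by inspection.

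First, I would record the standard fact that $\ff_p^*$ is cyclic of order $p-1$, so the image of the $k$-th power map $x \mapsto x^k$ on $\ff_p^*$ is the unique subgroup of order $\frac{p-1}{\gcd(k,p-1)}$. I would then apply this with $k = p-1$ and $k = \frac{p-1}{2}$ respectively. For $k=p-1$, Fermat's little theorem (or the order count above) gives $R_{p-1} = \{1\}$. For $k=\frac{p-1}{2}$, the order count gives $|R_{(p-1)/2}| = 2$, and since $-1$ is the unique element of order $2$ in $\ff_p^*$ (odd $p$), the unique order-$2$ subgroup of $\ff_p^*$ is $\{1,-1\}$, so $R_{(p-1)/2} = \{\pm 1\}$.

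With the connection sets in hand, the graph identifications are immediate. The directed graph $Cay(\ff_p,\{1\})$ has an arc $\vec{uv}$ exactly when $v-u=1$, producing the single directed cycle $0 \to 1 \to 2 \to \cdots \to p-1 \to 0$, which is $\vec{C}_p$. Similarly, since $R_{(p-1)/2} = -R_{(p-1)/2}$, the graph $\G(\tfrac{p-1}{2},p)$ is undirected by condition (C2), and $Cay(\ff_p,\{\pm 1\})$ has $uv$ as an undirected edge exactly when $v-u = \pm 1$, producing the undirected $p$-cycle $C_p$.

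There is no real obstacle here; the whole content is the computation of $(\ff_p^*)^k$ for the two specific exponents $k \in \{\tfrac{p-1}{2}, p-1\}$. The mild point worth stating cleanly is the uniqueness of the order-$2$ subgroup of $\ff_p^*$, which pins down $R_{(p-1)/2} = \{\pm 1\}$ rather than just giving its cardinality.
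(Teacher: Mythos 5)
Your proposal is correct and follows essentially the same route as the paper: both arguments come down to identifying the connection sets, namely $R_{p-1}=\{1\}$ via Fermat's little theorem and $R_{(p-1)/2}=\{\pm 1\}$ (the paper phrases the latter via $2$-regularity plus symmetry of $R_{(p-1)/2}$, you via the unique order-$2$ subgroup of $\ff_p^*$), and then reading off the directed and undirected $p$-cycles. No gaps; your explicit computation of the subgroup is a clean equivalent of the paper's counting argument.
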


\begin{proof}
Since $p$ is prime, by (C1) in \eqref{eq: C1C2}, the graph $\G(k,p)$ is connected for any $k$, hence in particular for $k=\frac{p-1}2$ and $k=p-1$. 
Also, by condition (C2) in \eqref{eq: C1C2}, the GP-graph	
	$\G(\frac{p-1}2,p)$ is undirected since $\frac{p-1}2$ is trivially a divisor of $\frac{p-1}2$ and $\G(p-1,p)$ is directed since $p-1$ is not a divisor of $\frac{p-1}2$.
	
Now, $\G(\frac{p-1}2,p)$ is $n$-regular with $n=\frac{p-1}k$ and $k=\frac{p-1}2$, hence $2$-regular. Since the graph is undirected, $R_{(p-1)/2}$ is symmetric and hence $-1 \in R_{(p-1)/2}$. Thus, any pair of consecutive vertices $a, a+1$ are connected and thus, $\G(\frac{p-1}2,p) \simeq C_p$.
	
For the directed graph $\G(p-1,p)$, there is an oriented
edge from any vertex $a$ to $a+1$ since $(a+1)-a=1\in R_{p-1}$, but not from $a+1$ to $a$ since $-1 \not\in R_{p-1}$. This is because by Fermat's little theorem we have that 
	$$a^{p-1}\equiv 1 \pmod p$$ 
for any $a$ coprime with $p$. Precisely this theorem implies that these are the only arcs in the digraph.
This proves $\G(p-1,p) \simeq \vec{C}_p$ and the result follows.
\end{proof}

We can now give the characterization of GP-graphs as a disjoint union of cycles. 

\begin{prop} \label{prop: cycle decompositions}
Let $p$ be an odd prime and $q=p^m$ with $m\in \N$. Then, we have the connected component decompositions
\begin{equation} \label{eq: Cps and Cp*s}
	\G(\tfrac{q-1}2,q) \simeq \underbrace{C_p \sqcup \cdots \sqcup C_p}_{p^{m-1} \text{times}} \qquad \text{and} \qquad \G(q-1,q) \simeq \underbrace{\vec{C}_p \sqcup \cdots \sqcup \vec{C}_p}_{p^{m-1} \text{times}}
\end{equation}	
where the (directed) $p$-cycles are the cosets $\omega^i + \Z_p$ of the (directed) $p$-cycle $\Z_p = \{0,1,\ldots,p-1\}$ \nolinebreak with $\omega$ a primitive element of $\ff_q^*$.
Their automorphism groups are 
\begin{equation} \label{eq: Aut Cps and Cp*s}
	{\rm Aut}(\G(\tfrac{q-1}2,q)) \simeq \mathbb{D}_p \wr \mathbb{S}_{p^{m-1}} \qquad \text{and} \qquad 
	{\rm Aut}(\G(q-1,q)) \simeq \mathbb{Z}_p \wr \mathbb{S}_{p^{m-1}},
\end{equation}
and their spectra are given by 
\begin{equation} 
  \begin{aligned} \label{eq: Spec Cps and Cp*s}	
	Spec(\G(\tfrac{q-1}2,q)) & = \{ [2\cos(\tfrac{2\pi j}p)]^{p^{m-1}} \}_{0 \le j \le p-1}, \\ 
	Spec(\G({q-1},q))        & = \{ [e^{\frac{2\pi j}p}]^{p^{m-1}} \}_{0 \le j \le p-1}.
  \end{aligned}
\end{equation}

Conversely, if the graph $\G(k,q)$ with $k\mid q-1$ and $q=p^m$ with $p$ an odd prime and $m\in \N$ has connected component decomposition as a union of cycles, then $\G(k,q)$ is as one of the graphs in \eqref{eq: Cps and Cp*s}.
\end{prop}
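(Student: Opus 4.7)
The plan is to derive both implications as a straightforward combination of Theorem \ref{thm: components} with Lemma \ref{lema: cycles}; the work is essentially bookkeeping rather than new computation.

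For the forward direction I would handle the two cases in parallel. Setting $k = \frac{q-1}{2}$ gives $n = \frac{q-1}{k} = 2$, and since $p$ is odd, $a = \mathrm{ord}_2(p) = 1$, so $k_a = \frac{p-1}{2}$. Theorem \ref{thm: components} then decomposes $\G(\tfrac{q-1}{2}, q)$ into $p^{m-1}$ copies of $\G(\tfrac{p-1}{2}, p)$, which by Lemma \ref{lema: cycles} is $C_p$. The components are realized as the induced subgraphs on the cosets $F_1 + h = \ff_p + h$, where $F_1 = \ff_p$ is the prime subfield; taking $h$ running over a transversal of the form $\{\omega^i\}$ produces the description in the statement. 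The case $k = q-1$ is entirely analogous: $n = 1$ (so $R_{q-1} = \{1\}$), $a = 1$, $k_a = p-1$, and the components are $\G(p-1, p) \simeq \vec{C}_p$.

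Once these decompositions are in hand, formulas \eqref{eq: Aut Cps and Cp*s} follow immediately from \eqref{eq: Aut(G)} of Theorem \ref{thm: components} together with the standard facts $\mathrm{Aut}(C_p) \simeq \mathbb{D}_p$ and $\mathrm{Aut}(\vec{C}_p) \simeq \mathbb{Z}_p$. Likewise, \eqref{eq: Spec Cps and Cp*s} follows from \eqref{eq: specmults} applied to the well-known spectra of $C_p$ and $\vec{C}_p$, each eigenvalue of the component picking up multiplicity $p^{m-1}$.

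For the converse, assume $\G(k,q)$ is a disjoint union of cycles. By Theorem \ref{thm: components}, each component is isomorphic to the connected GP-graph $\G(k_a, p^a)$, which is $n$-regular with $n = \frac{p^a-1}{k_a}$. A connected graph that is a cycle must be $2$-regular (undirected case) or have in/out-degree $1$ (directed case), forcing $n = 2$ or $n = 1$. In both cases $\mathrm{ord}_n(p) = 1$ since $p$ is odd, so $a = 1$ and each component has exactly $p$ vertices. This pins down $k_a \in \{\tfrac{p-1}{2}, p-1\}$ and $k = \frac{q-1}{n} \in \{\tfrac{q-1}{2}, q-1\}$, after which Lemma \ref{lema: cycles} identifies the components as in \eqref{eq: Cps and Cp*s}. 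The only mild obstacle is the degenerate value $n = 1$: one should read ``order modulo $1$'' as $1$, consistent with $R_{q-1} = \{1\}$ directly producing the directed $p$-cycle, and keep track that each component has $p^a$ vertices (not $p$ or $q$). Neither point introduces any real difficulty.
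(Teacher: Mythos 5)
Your proposal is correct and takes essentially the same route as the paper: the forward direction is Theorem \ref{thm: components} with $(n,a,k_a)=(2,1,\tfrac{p-1}2)$ for $k=\tfrac{q-1}2$ and $(1,1,p-1)$ for $k=q-1$, combined with Lemma \ref{lema: cycles}, and the automorphism groups and spectra follow from \eqref{eq: Aut(G)} and \eqref{eq: specmults} exactly as in the paper. Your converse argument (each component is the connected $n$-regular graph $\G(k_a,p^a)$, and being a cycle forces $n\in\{1,2\}$, hence $a=1$ and $k\in\{\tfrac{q-1}2,q-1\}$) is the natural completion of the converse, which the paper's written proof leaves implicit, and you correctly use $n=1$ in the case $k=q-1$, where the paper's text has a small slip stating $n=2$.
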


\begin{proof}
For the graphs 
\begin{equation} \label{eq: Cay q-1}
	\G(\tfrac{p^m-1}2,p^m) = Cay(\ff_{p^m}, \{x^{\frac{p^m-1}2} : x\in \ff_{p^m}^*\}) = Cay(\ff_{p^m}, \{ \pm 1\})
\end{equation}
with $p$ odd we have $n=2$, $a=1$ and $k_a=\frac{p-1}2$ in the notations of Theorem \ref{thm: components}. Hence, we obtain
the disjoint union 
	$$\G(\tfrac{p^m-1}2,p^m) \simeq \G(\tfrac{p-1}2,p) \sqcup \cdots \sqcup \G(\tfrac{p-1}2,p) \simeq C_p \sqcup \cdots \sqcup C_p$$
with $p^{m-1}$ components, where we have used Lemma \ref{lema: cycles}.

Similarly, for the graphs 
\begin{equation} \label{eq: Cay q-1/2}
	\G(p^m-1,p^m) = Cay(\ff_{p^m}, \{x^{p^m-1} : x\in \ff_{p^m}^*\}) = Cay(\ff_{p^m}, \{ 1\})
\end{equation}
with $p$ odd we have $n=2$, $a=1$ and $k_a=p-1$, and thus we get the disjoint union 
	$$\G(p^m-1,p^m) \simeq \G(p-1,p) \sqcup \cdots \sqcup \G(p-1,p) \simeq \vec{C}_p \sqcup \cdots \sqcup \vec{C}_p$$
with $p^{m-1}$ components, where we have used Lemma \ref{lema: cycles} again. This proves \eqref{eq: Cps and Cp*s}. 

The statement on the (directed) $p$-cycles realized as cosets is a consequence of Theorem~\ref{thm: components}, while the assertion on the automorphisms groups is automatic from Theorem~\ref{thm: components} and the basic fact that 
\begin{equation} \label{eq: AutCps}
	\textrm{Aut}(C_p) = \mathbb{D}_p \qquad \text{ and } \qquad \textrm{Aut}(\vec{C}_p)=\mathbb{Z}_p.
\end{equation}

The expressions for the spectra are direct consequences of \eqref{eq: specmults} and the fact that 
\begin{gather*}
Spec(\G(\tfrac{p-1}2,p)) = Spec(C_p)= \{2, 2\cos(\tfrac{2\pi}p), 2\cos(\tfrac{2\cdot 2\pi}p), \ldots, 2\cos(\tfrac{2(p-1)\pi}p) \}, \\
Spec(\G(p-1,p)) = Spec(\vec{C}_p) = \{1, \zeta_p, \zeta_p^2, \ldots, \zeta_p^{p-1}\},
\end{gather*}	
where $\zeta_p=e^{\frac{2\pi i}p}$, and the proof is complete. 
\end{proof}

We now give some comments and consequences of the previous result.

\begin{rem} \label{rem: dir/undir}
($i$) 
Observe that from \eqref{eq: AutCps} in the above proof we have that 
	$$ {\rm Aut}(C_p) = \Z_2 \rtimes  {\rm Aut}(\vec{C}_p)  \qquad \text{ and } \qquad 
		Spec(C_p) = 2 Re(Spec(\vec{C}_p)).$$
This can be explained by the fact that one usually considers Cayley graphs $Cay(G,S)$ to be undirected when in fact they have by definition all the edges with the two possible arrows (i.e.\@ one considers oriented 2-cycles as undirected edges, see the Introduction). In the case of the cycle $C_p$, we are considering it as the union of the directed cycle and the directed cycle with the arrows reversed, that is 
	$$C_p = \overset{_{\rightarrow}}{C}_p \cup \overset{_{\leftarrow}}{C}_p.$$ 		
In particular, we can think $\G(p-1,p)=\vec{C}_p$ as a subgraph of $\G(\frac{p-1}2,p)=C_p$.

\noindent ($ii$)
More generally, from the expressions \eqref{eq: Cps and Cp*s} and \eqref{eq: Spec Cps and Cp*s} one notices that  
	$$Spec(\G(\tfrac{q-1}2,q)) = 2 Re(Spec(\G(q-1,q))).$$
Indeed, recall that the eigenvalues of $Cay(G,S)$ are given by the sums 
	$$\chi(S)=\sum_{s\in S} \chi(s),$$ 
for characters $\chi \in \widehat{G}$.
Thus, since $R_{\frac{q-1}{2}} = R_{q-1} \cup (-R_{q-1})$ then for any $a\in \ff_{q}$ we have
	$$\chi_{a}(R_{\frac{q-1}{2}})= \chi_{a}(R_{q-1})+\chi_{a}(-R_{q-1})=2 Re(\chi_{a}(R_{q-1}))$$
where $\chi_{a}$ is the additive character of $\ff_{q}$ corresponding to $a$, i.e.\@ $\chi_{a}(x)=e^{\frac{2\pi i}{p}\Tr_{q/p}(ax)}$.
\end{rem}
Relative to Waring numbers (see the beginning of Section \ref{sec:2}) we have the following.

\begin{rem}
It is known that for $p$ prime, $g(\frac{p-1}2,p)=\frac{p-1}2$ and $g(p-1,p) = p-1$. 
By \eqref{eq: gkq} and Lemma \ref{lema: cycles} we recover this classic result. Furthermore, from \eqref{eq: gkq} and 
Proposition~\ref{prop: cycle decompositions} we obtain that the Waring numbers $g(\frac{p^m-1}2,p^m)$ and $g(p^m-1,p^m)$ do not exist for every $m>1$, since the graphs $\G(\frac{p^m-1}2, p^m)$ and $\G(p^m-1, p^m)$ are disconnected (see the comments before \eqref{eq: gkq}). Moreover, by \eqref{eq: q^mKq^m} the numbers $g(q^m+1,q^{2m})$ do not exist for every $m\in \N$ and by \eqref{eq: GPqml} and \eqref{eq: rook} we recover the known results 
	$g(p^m+1, p^{2mt})=g(q^m+1,q^{2m})=2$ 
for any $m,t\in \N$ with $t>1$.
\end{rem}

For $q=p^m$ odd, we always have four granted GP-graphs: 
\begin{enumerate}[$(i)$]
	\item $\G(1,q)=K_q$ (undirected, connected, $(q-1)$-regular),   
	
	\item $\G(2,q)= \mathcal{P}_q$ or $\vec{\mathcal{P}}_q$ (undirected if $q\equiv 1 \pmod 4$ or directed if $q\equiv 3 \pmod 4$, connected, $(\frac{q-1}2)$-regular), 
	
	\item $\G(\frac{q-1}2,q)=p^{m-1}C_p$ (undirected, disconnected, $2$-regular), and 
	
	\item $\G(q-1,q) = p^{m-1} \vec{C}_p$ (directed, disconnected, $1$-regular). 
\end{enumerate} 
That is, we always have
\begin{equation} \label{eq: granted}
 \G(q-1,q) \subset \G(\tfrac{q-1}2,q) \subset \G(1,q) 
	\qquad \text{and} \qquad  
	\G(q-1,q) \subset \G(2,q) \subset \G(1,q).
\end{equation}	
If $q \equiv 1 \pmod 4$, then also $\G(\tfrac{q-1}2,q) \subset \G(2,q)$ 
and we have the chain of subgraphs 
	$$\G(q-1,q) \subset \G(\tfrac{q-1}2,q) \subset \G(2,q) \subset \G(1,q).$$ 
In addition, over finite fields of square cardinality, we have the GP-graphs: 
$\G(q+1,q^2) = qK_q$ by \eqref{eq: ptKpt} and $\G(\tfrac{q+1}2,q^2) = L_{q,q}$ by \eqref{eq: rook}. 

Not only that, for each $k\mid p^m-1$, the graph $\G=\G(k,p^m)$ induces the subfield subgraph $\G(k,p^t)$ for each divisor 
$t \mid m$. If we want a subgraph of the same order of $\G$ we just take the disjoint union of $p^{m-t}$ copies of  
$\G(k,p^t)$, that is 
\begin{equation} \label{eq: subgraphs}
	p^{m-t}\G(k,p^t) \subset \G(k,p^m).
\end{equation}
For a fixed finite field $\ff_q$ with $q=p^m$, it is interesting to study the lattice of all the subgraphs $\G(k,p^t)$ with $t\mid m$ and $k\mid q-1$ of $\G(1,q)=K_q$.

\subsubsection*{Worked examples}
Now, we give the GP-graphs $\G(k,p^m)$ for the first odd primes $p=3,5,7$ and the smallest values of $m$. 
We will use \eqref{eq: C1C2}, Theorem \ref{thm: components} and Proposition \ref{prop: cycle decompositions}. 
We use conditions in \eqref{eq: C1C2} to determine directedness and connectedness. 
If the graph is disconnected, we give its connected components. 
When possible, we describe the GP-graphs in terms of known graphs (complete, Paley, cycle, lattice and strongly regular graphs). 

\begin{exam}[$p=3$, $1\le m \le 4$] \label{exam: GP3s}
Here we give the GP-graphs $\G(k,3^m)$ with $k\mid 3^m-1$ and $1\le m \le 4$. 
For $m=1$, we have the graphs 
	$$\G(1,3)=K_3=C_3 \qquad \text{and} \qquad \G(2,3) =  \vec{\mathcal{P}}_3 = \vec{C}_3.$$ 
Notice that here the 4 granted graphs mentioned in \eqref{eq: granted} coincide in pairs.

For $m=2$, we obtain the graphs: 
\begin{alignat}{2} \label{eq: Gk9}
\begin{aligned}
& \G(1,9) = K_9, && \qquad \G(2,9) = \mathcal{P}_9 = L_{3,3}, \\ 
& \G(4,9) = 3C_3 = 3K_3 = 3\G(1,3), && \qquad \G(8,9) = 3\vec{C}_3 = 3 \vec{\mathcal{P}}_3 = 3\G(2,3).  
\end{aligned}
\end{alignat}
These graphs are depicted below, where we use the labeling of the vertices as in \eqref{eq: F9}.
Notice that $\alpha^2+1=0$ and, since we are in characteristic 3, $\alpha^2=-1=2$. Hence, the non-zero squares, fourth powers and eighth powers in $\ff_9$ are given by 
	$$\{x^2\}_{x\in \ff_9^*} =\{1,2,\alpha, \alpha^2\}, \qquad \{x^4\}_{x\in \ff_9^*} = \{1,2\} \qquad \text{and} \qquad 
	\{x^8\}_{x\in \ff_9^*} = \{1\}.$$ 
Using this, it is easy to see that: 

\noindent

\begin{figure}[h!]
\begin{minipage}{0.45\textwidth}
\centering
\begin{tikzpicture}[scale=0.85, thick, 
	main node/.style={fill=black, circle, inner sep=2pt}, 
	label distance=3mm] 
	\foreach \i/\name/\pos in {
		0/$0$/right, 
		1/$1$/above right, 
		2/$2$/above, 
		3/$\alpha$/above, 
		4/$\alpha+1$/above left, 
		5/$\alpha+2$/left, 
		6/$2\alpha$/below left, 
		7/$2\alpha+1$/below , 
		8/$2\alpha+2$/below right} 
	{
		\node[main node] (n\i) at ({360/9 * \i}:3cm) [label=\pos:\name] {}; 
	}
	
	\foreach \i in {0,1,2,3,4,5,6,7,8} {
		\foreach \j in {0,1,2,3,4,5,6,7,8} {
			\ifnum \i<\j
			\path[thick, draw=black] (n\i) edge (n\j);
			\fi
		}
	}
	
\end{tikzpicture}
\end{minipage}
\hspace{.5cm}  
\begin{minipage}{0.45\textwidth}
	\centering
                                 	
\begin{tikzpicture}[scale=0.85, auto, 
thick, main node/.style={fill=black, circle, inner sep=2pt}, 
	label distance=3mm] 
	
	\foreach \i/\name/\pos in {
		0/$0$/right, 
		1/$1$/above right, 
		2/$2$/above, 
		3/$\alpha$/above, 
		4/$\alpha+1$/above left, 
		5/$\alpha+2$/left, 
		6/$2\alpha$/below left, 
		7/$2\alpha+1$/below , 
		8/$2\alpha+2$/below right} 
	{
		\node[main node] (n\i) at ({360/9 * \i}:3cm) [label=\pos:\name] {}; 
	}
	
	
	
	\path[thick, draw=black] (n0) edge (n1); 
	\path[thick, draw=black] (n0) edge (n2); 
	\path[thick, draw=black] (n0) edge (n3); 
	\path[thick, draw=black] (n0) edge (n6); 
	
	\path[thick, draw=black] (n1) edge (n0); 
	\path[thick, draw=black] (n1) edge (n2); 
	\path[thick, draw=black] (n1) edge (n4); 
	\path[thick, draw=black] (n1) edge (n7); 
	
	\path[thick, draw=black] (n2) edge (n0); 
	\path[thick, draw=black] (n2) edge (n1); 
	\path[thick, draw=black] (n2) edge (n5); 
	\path[thick, draw=black] (n2) edge (n8); 
	
	\path[thick, draw=black] (n3) edge (n0); 
	\path[thick, draw=black] (n3) edge (n4); 
	\path[thick, draw=black] (n3) edge (n5); 
	\path[thick, draw=black] (n3) edge (n6); 
	
	\path[thick, draw=black] (n4) edge (n1); 
	\path[thick, draw=black] (n4) edge (n3); 
	\path[thick, draw=black] (n4) edge (n5); 
	\path[thick, draw=black] (n4) edge (n7); 
	
	\path[thick, draw=black] (n5) edge (n2); 
	\path[thick, draw=black] (n5) edge (n3); 
	\path[thick, draw=black] (n5) edge (n4); 
	\path[thick, draw=black] (n5) edge (n8); 
	
	\path[thick, draw=black] (n6) edge (n0); 
	\path[thick, draw=black] (n6) edge (n3); 
	\path[thick, draw=black] (n6) edge (n7); 
	\path[thick, draw=black] (n6) edge (n8); 
	
	\path[thick, draw=black] (n7) edge (n1); 
	\path[thick, draw=black] (n7) edge (n4); 
	\path[thick, draw=black] (n7) edge (n8); 
	\path[thick, draw=black] (n7) edge (n6); 
	
	\path[thick, draw=black] (n8) edge (n2); 
	\path[thick, draw=black] (n8) edge (n5); 
	\path[thick, draw=black] (n8) edge (n7); 
	\path[thick, draw=black] (n8) edge (n6); 
	
\end{tikzpicture}
\end{minipage}
\caption{The graphs $\G(1,9)$ and $\G(2,9)$.}
\end{figure}
                                 	
\begin{figure}
\begin{minipage}{0.45\textwidth}
	\centering
\begin{tikzpicture}[scale=0.85, auto, thick, 
	main node/.style={fill=black, circle, inner sep=2pt}, 
	label distance=2mm] 
	
	\foreach \i/\name/\pos in {
		0/$0$/right, 
		1/$1$/above right, 
		2/$2$/above, 
		3/$\alpha$/above, 
		4/$\alpha+1$/above left, 
		5/$\alpha+2$/left, 
		6/$2\alpha$/below left, 
		7/$2\alpha+1$/below , 
		8/$2\alpha+2$/below right} 
	{
		\node[main node] (n\i) at ({360/9 * \i}:3cm) [label=\pos:\name] {}; 
	}
	
	\path[thick, draw=black] (n0) edge (n1); 
	\path[thick, draw=black] (n0) edge (n2); 
	
	\path[thick, draw=black] (n1) edge (n0); 
	\path[thick, draw=black] (n1) edge (n2); 
	
	\path[thick, draw=black] (n2) edge (n0); 
	\path[thick, draw=black] (n2) edge (n1); 
	
	\path[thick, draw=black] (n3) edge (n4); 
	\path[thick, draw=black] (n3) edge (n5); 
	
	\path[thick, draw=black] (n4) edge (n3); 
	\path[thick, draw=black] (n4) edge (n5); 
	
	\path[thick, draw=black] (n5) edge (n3); 
	\path[thick, draw=black] (n5) edge (n4); 
	
	\path[thick, draw=black] (n6) edge (n7); 
	\path[thick, draw=black] (n6) edge (n8); 
	
	\path[thick, draw=black] (n7) edge (n6); 
	\path[thick, draw=black] (n7) edge (n8); 
	
	\path[thick, draw=black] (n8) edge (n6); 
	\path[thick, draw=black] (n8) edge (n7); 
	
\end{tikzpicture}
\end{minipage}
\hspace{0.5cm} 
\begin{minipage}{0.45\textwidth}
\centering
\begin{tikzpicture}[scale=0.85, auto, thick, 
	main node/.style={fill=black, circle, inner sep=2pt}, 
	label distance=2mm] 

\foreach \i/\name/\pos in {
	0/$0$/right, 
	1/$1$/above right, 
	2/$2$/above, 
	3/$\alpha$/above, 
	4/$\alpha+1$/above left, 
	5/$\alpha+2$/left, 
	6/$2\alpha$/below left, 
	7/$2\alpha+1$/below , 
	8/$2\alpha+2$/below right} 
{
	\node[main node] (n\i) at ({360/9 * \i}:3cm) [label=\pos:\name] {}; 
}
	
	\path[->, draw=black] (n0) edge (n1); 
	
	\path[->, draw=black] (n1) edge (n2); 
	
	\path[->, draw=black] (n2) edge (n0); 
	
	\path[->, draw=black] (n3) edge (n4); 
	
	\path[->, draw=black] (n4) edge (n5); 
	
	\path[->, draw=black] (n5) edge (n3); 
	
	\path[->, draw=black] (n6) edge (n7); 
	
	\path[->, draw=black] (n7) edge (n8); 
	
	\path[->, draw=black] (n8) edge (n6); 
\end{tikzpicture}
\end{minipage}
\caption{The graphs $\G(4,9)$ and $\G(8,9)$.}
\end{figure}

For $m=3$, we have the GP-graphs:
\begin{alignat*}{2}
& \G(1,27) = K_{27}, && \qquad \G(2,27) = \vec{\mathcal{P}}_{27}, \\ 
&\G(13,27) = 9 C_3 = 9 K_3 =9 \Gamma(1,3), && \qquad \G(26,27) = 9 \vec{C}_3 = 9 \vec{\mathcal{P}}_3 = 9 \G(2,3).
\end{alignat*}	

Finally, for $m=4$, considering the divisors of $3^4-1=80$ we have 
the connected components decomposition of the 
GP-graphs $\G(k,81)$ for the divisors $k=1,2,4,5,8,10,16,20,40,80$: 
\begin{equation} \label{eq: G81's)}
 \begin{aligned}
		\G(1,81)  & = K_{81}, \\ 
		\G(2,81)  & = \mathcal{P}_{81}, \\ 
		\G(4,81)  & = srg(81,20,1,6) = \text{Brouwer-Haemers graph}, \\ 
		\G(5,81)  & = srg(81,16,7,2) = L_{9,9}, \\ 
		\G(8,81)  & = \text{undirected, connected, $10$-regular (not srg)}, \\ 
		\G(10,81) & = 9K_9, \\
		\G(16,81) & = \text{directed, connected, $5$-regular (not srg)}, \\ 
		\G(20,81) & = 9 \mathcal{P}_9, \\
		\G(40,81) & = 27 C_3 = 27 K_3, \\ 
		\G(80,81) & = 27 \vec{C}_3 = 27 \vec{\mathcal{P}}_3. 
 \end{aligned}
\end{equation}
These graphs are undirected for $k\ne 16, 80$, since $k\mid \frac{3^4-1}2=40$ in this case.
To see connectedness, for instance, for $\G(20,81)$ we have 
$n=\tfrac{81-1}{20}=4$ and $3^2\equiv 1 \pmod 4$ and thus $a=2$.  
Hence, $k_a=\frac{3^2-1}{4}=2$ and thus by Theorem \ref{thm: components}, the graph has $3^{4-2}=9$ connected components isomorphic to $\G(k_a,3^2)=\G(2,9)=\mathcal{P}_9$. 

Now, notice that 
	$$\G(4,81) = \G(3^1+1,3^4)$$ 
is as in \eqref{eq: GPqml}, hence a strongly regular graph with parameters $srg(81,20,1,6)$. This is the Brouwer-Haemers graph which is known to be Ramanujan and unique with these parameters. 
Also, 
	$$\G(10,81)=\G(3^2+1, 3^4)$$
is as in \eqref{eq: ptKpt} and hence $9$ copies of $K_9$.
Notice that 
	$$\G(5,81)=\G(\tfrac{3^2+1}2,3^4),$$ 
and hence by \eqref{eq: rook} it is the rook's graph $L_{9,9}=srg(81,16,7,2)$. 
The graph $\G(8,81)$ is not strongly regular, for if it were, its parameters $srg(81,8,e,d)$ together with \eqref{eq: srg cond} would imply that $e=2$ and $d=0$, but $srg(81,16,2,0)$ does not exist (see Brouwer's tables of strongly regular graphs \cite{BWP}).  
Finally, $\G(16,81)$ is not strongly regular since it is directed.

Relative to the comments around \eqref{eq: granted} and \eqref{eq: subgraphs}, observe that for every $k \mid 80$, $k' \mid 8$ and $k'' \mid 2$, we have the GP-graphs 
	$$\G(k,3^4), \qquad 9 \G(k',3^2) \qquad  \text{and} \qquad 27 \G(k'',3)$$ 
of 81 vertices. This gives $16=10+4+2$ GP-graphs, but we know that there are only 10 GP-graphs of order 81. 
Hence, there are some repetitions. 
In the previous list in \eqref{eq: G81's)} we see that 14 GP-graphs appear in the connected component decompositions.
For instance, for $k=1,2$ we have the graphs  
$\G(1,81)=K_{81}$, $9\G(1,9)=9K_{9}$, $27\G(1,3)=27K_3$ 
and 
$\G(2,81)=\mathcal{P}_{81}$, $9\G(2,9)=9\mathcal{P}_9$, $27\G(2,3)=27\mathcal{P}_{3}$. 
For $k=4$ we get 
$$\G(4,81), \qquad 9\G(4,9) = 9(3K_3) \qquad \text{and} \qquad 27\G(4,3)=27\G(1,3)=27K_3,$$
where we used \eqref{eq: Gk9}. 
The remaining two not used graphs $\G(4,9)$ and $\G(8,9)$ 
give interesting decompositions as disjoint unions, which are not 
the connected component decompositions. 
Namely, we get a disjoint union decomposition of the graph 
	$$ \G(40,81)=9\G(4,9)=9(3K_3),$$ 
which is not the connected component decomposition. Similarly, for $k=8$, the graph $9\G(8,9)=9(3\vec{C}_3)$ gives another disjoint union decomposition of $\G(80,81)$ which is not the connected component decomposition.
\hfill $\lozenge$
\end{exam}

\begin{exam}[$p=5$]
Here we give the GP-graphs $\G(k,5^m)$ with $k\mid 5^m-1$ and $m=1,2$.
For $m=1$, we have the graphs 
	$$\G(1,5)=K_5, \qquad \G(2,5) = \mathcal{P}_5 = C_5 \qquad \text{ and } \qquad \G(4,5)=\vec{C}_5.$$ 
Notice that here $\frac{5-1}2=2$, so $\G(2,q)=\G(\frac{q-1}2,q)$.

For $m=2$, the divisors of $24$ are $1,2,3,4,6,8,12,24$ and hence we have the eight 
graphs:
\begin{align*}
\G(1,25)  & = K_{25}, \\ 
\G(2,25)  & = \mathcal{P}_{25}, \\ 
\G(3,25)  & = srg(25,8,3,2) = L_{5,5}, \\ 
\G(4,25)  & = \text{undirected, connected, $6$-regular (not srg)}, \\ 
\G(6,25)  & = K_5 \sqcup K_5 \sqcup K_5 \sqcup K_5 \sqcup K_5, \\ 
\G(8,25)  & = \text{directed, connected, $3$-regular (not srg)}, \\
\G(12,25) & = C_5 \sqcup C_5 \sqcup C_5 \sqcup C_5 \sqcup C_5 = 
 	 \mathcal{P}_5 \sqcup \mathcal{P}_5 \sqcup \mathcal{P}_5 \sqcup \mathcal{P}_5 \sqcup \mathcal{P}_5, \\ 
\G(24,25) & = \vec{C}_5 \sqcup \vec{C}_5 \sqcup \vec{C}_5 \sqcup \vec{C}_5 \sqcup \vec{C}_5. 
\end{align*}
Since $k\mid \frac{q-1}2 = 12$ for any $k \ne 8, 24$ all the graphs are undirected except for $\G(8,25)$ and $\G(24,25)$. 
We know the extreme cases, those for $k=1,2,12,24$. So, we now explain the others. 
For instance, for $\G(4,25)$ we have 
$n=\tfrac{25-1}4=6$ and $5^2\equiv 1 \pmod 6$, hence $a=2=m$ and  
the graph is connected.
For $\G(6,25)$, $n=\frac{25-1}6=4$ and $5^1\equiv 1 \pmod 4$, hence $a=1$ and $k_a=\frac{5^a-1}{4}=1$.  Thus, by Theorem \ref{thm: components}, the graph has $5^{2-1}$ connected components isomorphic to $\G(k_a,5^a)=\G(1,5)=K_5$. 
 Similarly for the others.

Notice that 
	$$\G(3,25) = \G(\tfrac{5^1+1}2,5^2)$$ 
and hence by \eqref{eq: rook} it is the rook's graph $L_{5,5}=srg(25,8,3,2)$. 
The 6-regular graph $\G(4,25)$ is not strongly regular since condition \eqref{eq: srg cond} on the parameters $(25,6,e,d)$ implies that $e=2$ and $d=0$ and the parameters $(25,6,2,0)$ are not in Brouwer's list of parameters of strongly regular graphs \cite{BWP}. 
Alternatively, by ($b$) of Theorem 3.2 in \cite{PV3}, $\G(4,25)$ is not an srg since it has 5 different eigenvalues. 
The graph $\G(8,25)$ is not strongly regular since it is directed.
\end{exam}

\begin{exam}[$p=7$]  \label{exam: GP7s}
Here we give the GP-graphs $\G(k,7^m)$ with $k\mid 7^m-1$ and $m=1,2$.
For $m=1$, we have the graphs: 
	$$\G(1,7)=K_7, \qquad \G(2,7)=\vec{\mathcal{P}}_7, \qquad \G(3,7) = C_7 \qquad \text{ and } \qquad \G(6,7) = \vec{C}_7.$$ 
For $m=2$, the divisors of $48$ are $1,2,3,4,6,8,12,16,24,48$ and hence we have the ten 
graphs:
\begin{align*}
\G(1,49)  & = K_{49}, \\ 
\G(2,49)  & = \mathcal{P}_{49}, \\ 
\G(3,49)  & = \text{undirected, connected, $16$-regular (not srg)}, \\ 
\G(4,49)  & = srg(49,12,5,2) = L_{7,7}, \\ 
\G(6,49)  & = \text{undirected, connected, $8$-regular}, \\
\G(8,49)  & = K_7 \sqcup K_7 \sqcup K_7 \sqcup K_7 \sqcup K_7 \sqcup K_7 \sqcup K_7, \\
\G(12,49) & = \text{undirected, connected, $4$-regular},  \\ 
\G(16,49) & = \vec{\mathcal{P}}_7 \sqcup \vec{\mathcal{P}}_7 \sqcup \vec{\mathcal{P}}_7 \sqcup \vec{\mathcal{P}}_7 \sqcup \vec{\mathcal{P}}_7 \sqcup \vec{\mathcal{P}}_7 \sqcup \vec{\mathcal{P}}_7, \\ 
\G(24,49) & = C_7 \sqcup C_7 \sqcup C_7 \sqcup C_7 \sqcup C_7 \sqcup C_7 \sqcup C_7, \\
\G(48,49) & = \vec{C}_7 \sqcup \vec{C}_7 \sqcup \vec{C}_7 \sqcup \vec{C}_7 \sqcup \vec{C}_7 \sqcup \vec{C}_7 \sqcup \vec{C}_7.
\end{align*}
We leave to the reader the details of checking directedness and connectedness. 
Note that $\G(8,49)=\G(7^1+1,7^2)$ is as in \eqref{eq: ptKpt} and hence equal to 7 copies of $K_7$.
For the decomposition of $\G(16,49)$ we use Theorem \ref{thm: components}.

Although there is an srg with parameters $srg(49,16,3,6)$, by ($b$) of Theorem 3.1 in \cite{PV3} the graph $\G(3,7^2)$ has 4 different eigenvalues and hence it is not strongly regular. 
Note that since 
	$$\G(4,49)=\G(\tfrac{7^1+1}2,7^2),$$ 
by \eqref{eq: rook} it is the rook's graph $L_{7,7}$. 
On the other hand, by looking at Brouwer's tables of parameters \cite{BWP}, we see that $\G(6,49)$ and $\G(12,49)$ are not strongly regular graphs.
\hfill $\lozenge$
\end{exam}

\section{Non-bipartiteness of $\G(k,q)$} \label{sec:5}
Here we show that the graphs $\G(k,q)$ are non-bipartite, with the only exception of the graphs $\G(2^m-1,2^m)$ with $m \in \N$. 

A simple graph $\G=(V,E)$ is \textit{bipartite} if there exists a bipartition of $V$ into two independent sets, that is there exist  disjoint subsets $V_1,V_2$ of $V$ such that $V=V_1\cup V_2$ and if $a,b\in V_i$ with $i\in \{1,2\}$ then $ab \not \in E$
($\{V_1, V_2\}$ is called the bipartition of $V$).

The \textit{spectral radius} of $\G$ is defined as
\begin{equation} \label{eq: spectral radius}
\rho(\G)=\max \{ |\lambda|: \lambda \in Spec(\G)\}.
\end{equation}	 
An undirected graph $\G$ is bipartite if some (and hence all) of the following equivalent conditions are satisfied:
\vspace{1.5mm}
\begin{equation} \label{eq: C1C4}
\begin{aligned}
& (\text{B1}). \quad \text{The number $-\rho(\G)\in Spec(\G)$}. \\[1mm]
& (\text{B2}). \quad \text{The spectrum of $\G$ is symmetric (i.e.\@ $\lambda \in Spec(\G) \Leftrightarrow -\lambda \in Spec(\G)$)}. \\[1mm]
& (\text{B3}). \quad \text{ $\G$ has no cycles of odd length}. \\[1mm]
& (\text{B4}). \quad \text{The chromatic number of $\G$ is 2, that is $\chi(\G)=2$}.  
\end{aligned}
\end{equation}
Also, it is well-known that $\rho(\G) =\Delta(\G)$ with 
$$\Delta(\G) = \max \{\deg(v): v\in V(\G)\},$$
where $\deg(v)$ is the degree of $v$.
When $\G$ is $m$-regular, $\rho(\G)=m$ and hence (B1) 
can be changed by ``$-m \in Spec(\G)$''. 

The notion on bipartiteness can be extended to directed graphs as follows. A directed graph $D$ is called \textit{bipartite} if there exists a bipartition $V_1, V_2$ of $V(D)$ such that every arc of $D$ connects a vertex of $V_1$ to a vertex in $V_2$ or vice versa. In this case, it can be shown that the condition on $D$ to be bipartite is equivalent to  (B1) and (B2) of the above list (see Theorem~3.1 in \cite{Br}). Moreover the condition (B3)  can be changed by 
\begin{enumerate}[(B3').]
	\item $D$ has only even directed cycles.
\end{enumerate}

In Theorem 5.8 of \cite{PV3}, we proved that $\G(k,q)$ and $\bar \G(k,q)$ are non-bipartite for any pair of semiprimitive integers $(k,q)$. We now extend this result by showing that all connected generalized Paley graphs $\G(k,q)$ are non-bipartite, except for the trivial case $\G(1,2)=K_2$. 
\begin{prop} \label{prop caract. GP bip}
 Let $q=p^m$ with $p$ prime, $m\in \N$, and let $k \in \N$ be such that $k\mid q-1$. 
 If $\G(k,q)$ is connected then $\G(k,q)$ is non-bipartite with the only exception of $\G(1,2)=K_2$.
\end{prop}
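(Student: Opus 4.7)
The plan is to work spectrally: by condition (B1) in \eqref{eq: C1C4} (extended to the directed setting via \cite{Br}), $\G(k,q)$ is bipartite if and only if $-n\in Spec(\G(k,q))$, where $n=\tfrac{q-1}{k}$ is the regularity degree of $\G(k,q)$. Thus the task reduces to showing that, under the connectedness hypothesis and with the single exception $(k,q)=(1,2)$, the value $-n$ does not appear as an eigenvalue of $\G(k,q)$.

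Since $\G(k,q)$ is an abelian Cayley graph on $\ff_q$, its eigenvalues are the Gauss sums $\eta_a=\chi_a(R_k)=\sum_{x\in R_k}\chi_a(x)$ indexed by $a\in\ff_q$, where $\chi_a(x)=e^{2\pi i\,\Tr_{q/p}(ax)/p}$ is the additive character attached to $a$. Each summand lies on the unit circle, so the triangle inequality yields $|\eta_a|\le n$, with equality precisely when all $\chi_a(x)$ coincide; hence $\eta_a=-n$ forces $\chi_a(x)=-1$ for every $x\in R_k$. If $p$ is odd this is immediately impossible, since the image of $\chi_a$ lies in the group of $p$-th roots of unity, which does not contain $-1$. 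This disposes of the odd characteristic case at once.

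The interesting case is $p=2$, where $\chi_a(x)=(-1)^{\Tr(ax)}$ and the required condition becomes $\Tr(ax)=1$ for all $x\in R_k$, for some $a\in\ff_q^*$. Here I would invoke the geometric-series identity: if $n\ge 2$, writing $R_k=\langle g\rangle$ with $g^n=1$ and $g\neq 1$ gives
\begin{equation*}
\sum_{x\in R_k} x \;=\; \sum_{i=0}^{n-1}g^i \;=\; \frac{g^n-1}{g-1}\;=\;0 \quad\text{in } \ff_q.
\end{equation*}
Applying the $\ff_2$-linear trace yields $0=\Tr(a\sum_{x}x)=\sum_{x\in R_k}\Tr(ax)\equiv n\pmod 2$, forcing $n$ to be even; but $n\mid 2^m-1$ is odd, a contradiction. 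Hence $n=1$, i.e.\@ $R_k=\{1\}$ and $k=q-1$, and then condition (C1) of \eqref{eq: C1C2} forces $m=ord_1(2)=1$, yielding the lone exception $\G(1,2)=K_2$.

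The only delicate step is the parity argument in characteristic $2$: once one observes that a nontrivial multiplicative subgroup of $\ff_q^*$ sums to zero and that the $\ff_2$-linearity of the trace converts this vanishing into a parity constraint on $n$, the contradiction against $n\mid 2^m-1$ is automatic. All other ingredients are routine spectral bookkeeping on abelian Cayley graphs.
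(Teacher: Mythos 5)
Your proof is correct, but it takes a genuinely different route from the paper's. The paper argues combinatorially via criterion (B3): for $q$ odd it exhibits the explicit $p$-cycle $x, x+1, \ldots, x+p=x$, and for $q=2^m$ with $k<2^m-1$ it chooses an odd prime $r \mid n$, takes $\alpha$ of order $r$, and closes the walk $0,1,1+\alpha,\ldots,1+\alpha+\cdots+\alpha^{r-1}=0$ into an odd cycle of length $r$, using that the subgroup of order $r$ sums to zero; the exceptional case $k=2^m-1$, $m>1$ is discarded as disconnected. You instead work with (B1)/(B2) and the character-sum description of the eigenvalues of abelian Cayley (di)graphs: $-n\in Spec(\G(k,q))$ would force $\chi_a(x)=-1$ for all $x\in R_k$, which is impossible for odd $p$ (no $-1$ among $p$-th roots of unity) and, for $p=2$ and $n\ge 2$, contradicts parity via $\sum_{x\in R_k}x=0$ and the $\ff_2$-linearity of $\Tr$; the residual case $n=1$ plus connectedness pins down $\G(1,2)=K_2$. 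Interestingly, both proofs hinge on the vanishing of the sum over a nontrivial multiplicative subgroup, but use it for different ends (closing an odd cycle versus a trace-parity contradiction). What each buys: the paper's construction produces explicit odd cycles (hence information on odd girth, with length the least prime divisor of $n$) and needs no spectral facts about digraphs; your argument is shorter, treats the directed and undirected cases uniformly, uses connectedness only to identify the exception, and in fact shows $-n\notin Spec(\G(k,q))$ whenever $(k,q)\neq(2^m-1,2^m)$, so it essentially yields Theorem \ref{teo: GPbipartito} directly, without passing through the component decomposition of Theorem \ref{thm: components}. The only point to flag is the directed case: you need the implication ``bipartite $\Rightarrow$ $-\rho\in Spec$'' for digraphs, which is covered by the paper's appeal to \cite{Br} (and can also be seen directly, since the adjacency matrix of a bipartite digraph is similar to its negative), so there is no gap.
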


\begin{proof}
We will prove that $\G(k,q)$ is bipartite if and only if $q=2$ and $k=1$.
In one direction it is obvious: if $k=1$ and $q=2$, we have $\G(1,2)=K_2=C_2$, which is trivially bipartite.

\sk 

Conversely, suppose that $\G(k,q)$ is bipartite.
Notice that if $q$ is odd, then by successively adding $1$ to a vertex we can obtain a (directed) cycle of odd length $p$ in $\G(k,q)$, and so $\G(k,q)$ is not bipartite, a contradiction to (B3) or (B3'). 
In fact, if $x\in \ff_q$, then 
$$x, \: x+1, \:  x+2, \:  \ldots, \:  x+p-1, \:  x+p=x$$ 
is an oriented $p$-cycle in $\G(k,q)$, say $\vec{C}_p(x)$. 
Clearly, all the vertices are different, two consecutive vertices $x+j$ and $x+j+1$ form an edge since 
	$$ (x+j+1) - (x+j)=1 \in R_k, $$ 
and $x+p=x$ since we are in characteristic $p$.
If $\G(k,q)$ is undirected, we have the reverse cycle $\overset{_{\leftarrow}}{C}_p(x)$, and hence an undirected $p$-cycle  $C_p(x) = \overset{_{\rightarrow}}{C}_p(x) \cup \overset{_{\leftarrow}}{C}_p(x)$.
	
\sk 
So, from now on we assume that $q=2^m$ with $m \ge 1$, hence $\G(k,q)$ is undirected.
Recall that $\G(k,q)$ is $n$-regular with $n=\frac{q-1}k$ and that $\frac{q-1}k$ is the principal eigenvalue $\lambda_1$ of $\G(k,q)$. Hence, $\G(k,q)$ is bipartite if and only if has no odd cycles,  by (B3).

Notice that if $k=2^{m}-1$, then the graph $\G(2^{m}-1,2^{m})$ is not connected for $m>1$ by Theorem \ref{thm: components}, so we can assume that $k<2^{m}-1$ when $m>1$.
	
\sk	
\noindent
\textit{Claim:} If $k<2^{m}-1$ with $m>1$ then the undirected graph $\G(k,2^m)$ contains a cycle of odd length $r$ 
(in fact, $r$ is prime with $r \mid \frac{2^m-1}{k}$).

Assume that $k<2^{m}-1$ with $m>1$.
The elements of $R_k$ are exactly the roots of the polynomial $p(x)=x^{\frac{2^m-1}{k}}-1\in \ff_{2}[x]$, which in $\ff_{2^m}$ factorizes as
	$$ p(x) = \prod_{a\in R_k}(x-a).$$ 
In particular, the sum of all the elements in $R_k$ coincides with the second leading coefficient of $p(x)$ and so
\begin{equation*} \label{eq: Rk}
	\sum_{a\in R_k}a=0.
\end{equation*}

Since $\frac{2^m-1}{k}>1$ and it is an odd number, then there exists an odd prime $r$ with $r \mid \frac{2^m-1}{k}$, i.e.\@ $\frac{2^m-1}{k} = rt$ for some $t \in \mathbb{N}$, 
and hence we have 
	$$R_{kt} \subseteq R_k.$$ 
As before, the elements of $R_{kt}$ are exactly the roots of the polynomial $q(x)=x^{\frac{2^m-1}{kt}}-1$, 
that is
$ q(x) = \prod_{a\in R_{kt}}(x-a) \in \ff_{2^m}[x]$,  
and hence there exists $r=|R_{kt}|$ elements in $R_k$ such that 
\begin{equation} \label{eq: Rkt}
	\sum_{a\in R_{kt}}a=0.
\end{equation}
Let $\alpha$ be a generator of $R_{\frac{2^m-1}{r}}$, 
thus $\alpha = \omega^{\frac{2^m-1}{r}}$ is an element of order $r$ in $\ff_{2^m}$, 
where $\omega$ is a primitive element of $\ff_{2^m}$.

Let us see that $0, 1, 1+\alpha, 1+\alpha+\alpha^{2}, \ldots, 1+\alpha+\cdots+\alpha^{r-2}$ and $1+\alpha+\cdots+\alpha^{r-1}$ form a cycle in $\G(k,2^m)$. 
Indeed, since $\alpha\in R_{k}$, any power $\alpha^i$ is also in $R_k$. So, the powers of $\alpha$ induce edges in the graph $\G(k,2^m)$; namely, there is an edge between any vertex $x$ and $y=x+\alpha^i$ for any $i$.
Thus, we have the following walk in $\G(k,2^m)$.
\begin{equation} \label{eq: cycle}
	0, \: 1, \: 1+\alpha, \: 1+\alpha+\alpha^{2}, \: \ldots, \: 1+\alpha+\cdots+\alpha^{r-2}, \: 1+\alpha+\cdots+\alpha^{r-1}
\end{equation}	
and, since $1+\alpha+\cdots+\alpha^{r-1}=0$ by \eqref{eq: Rkt}, it is a closed walk in $\G(k,2^m)$ of length $r$. 

It is enough to see that all of the intermediate vertices $1+\alpha+\cdots +\alpha^i$ with $i\ne r-1$ are all different.
Suppose, by contradiction, that there are $i,j \in \{1,\ldots,r-1\}$ with $i<j$ such that
	$$ 1+\alpha+\cdots +\alpha^i = 1+\alpha+\cdots +\alpha^i + \alpha^{i+1} + \cdots +\alpha^j. $$
Then, we have that $\alpha^{i+1} + \cdots +\alpha^j=0$, that is $\alpha^{i+1} (1+\alpha + \cdots +\alpha^{j-i-1})=0$, 
which implies that
	$${\sum_{\ell=0}^{j-i-1} \alpha^{\ell}=0}.$$ 
Thus, $\alpha$ is a zero of the polynomial $s(x)=x^{j-i}-1$ and hence $\alpha^{j-i}=1$ with $j-i<r$, which is absurd. 
In this way, we see that all of the intermediate vertices in \eqref{eq: cycle} are all different, and hence we obtain an oriented 

By the Claim, if $m>1$ and $k<2^{m}-1$, then the graph $\G(k,2^m)$ has some odd cycle, which implies that $\G(k,2^{m})$ is non-bipartite. This completes the proof.
\end{proof}

By putting together the previous proposition and Theorem \ref{thm: components} we obtain the following characterization of non-bipartiteness of GP-graphs.
\begin{thm} \label{teo: GPbipartito}
	Let $q=p^m$ be a fixed prime power with $m \in \N$ and $k \in \N$ with $k\mid q-1$. 
	Then, $\G(k,q)$ is non-bipartite, except for $\G(2^m-1,2^m) \simeq K_2 \sqcup \cdots \sqcup K_2$ $(2^{m-1}$ copies$)$.
\end{thm}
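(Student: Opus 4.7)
The plan is to combine the component decomposition in Theorem \ref{thm: components} with the connected case handled in Proposition \ref{prop caract. GP bip}. First, I would recall the basic fact that a (di)graph is bipartite if and only if each of its connected components is bipartite; this follows immediately from the definition via a bipartition of the vertex set, since one can bipartition each component independently. Applying this observation to the decomposition
\[
\G(k,q) \simeq \underbrace{\G(k_a,p^a) \sqcup \cdots \sqcup \G(k_a,p^a)}_{p^{m-a} \text{ times}}
\]
from Theorem \ref{thm: components}, where $a = \mathrm{ord}_n(p)$, $n = \tfrac{q-1}{k}$ and $k_a = \tfrac{p^a-1}{n}$, I reduce the problem to studying bipartiteness of the single connected graph $\G(k_a, p^a)$.

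Next, I would invoke Proposition \ref{prop caract. GP bip}: since $\G(k_a, p^a)$ is connected by construction (this is exactly what the choice of $a$ guarantees), it is non-bipartite unless $(k_a, p^a) = (1, 2)$, in which case it equals $K_2$. Therefore $\G(k,q)$ is bipartite if and only if $p^a = 2$ and $k_a = 1$, i.e., $p = 2$, $a = 1$, and $k_a = 1$.

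Then I would translate these constraints back to $k$ and $q$. The condition $p = 2$ forces $q = 2^m$. The condition $k_a = 1$ with $a = 1$ gives $n = p^a - 1 = 1$, and since $n = \tfrac{q-1}{k}$, this forces $k = q - 1 = 2^m - 1$. Conversely, for $(k,q) = (2^m-1, 2^m)$ one directly checks $n = 1$, $a = 1$ and $k_a = 1$, so by Theorem \ref{thm: components} the graph decomposes as $p^{m-a} = 2^{m-1}$ copies of $\G(1,2) = K_2$, which is indeed bipartite. This matches the description in Example \ref{ex: cliques}(ii).

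I do not expect any serious obstacle: once Proposition \ref{prop caract. GP bip} and Theorem \ref{thm: components} are in hand, the argument is essentially bookkeeping on the parameters $(n, a, k_a)$. The only small point to be careful about is that the equivalence ``bipartite iff each component bipartite'' must be stated for directed graphs as well, using the bipartition definition given in Section \ref{sec:5}; this is immediate from the definition and handles the directed cases uniformly.
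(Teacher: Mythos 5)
Your proposal is correct and follows essentially the same route as the paper: reduce via the component decomposition of Theorem \ref{thm: components}, use that a disjoint union is bipartite iff each component is, apply Proposition \ref{prop caract. GP bip} to the connected component $\G(k_a,p^a)$, and then do the parameter bookkeeping $p=2$, $a=1$, $k_a=1$, $n=1$, $k=2^m-1$. The paper's proof is the same argument, merely phrased by splitting explicitly into the connected and disconnected cases.
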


\begin{proof}
If $\frac{q-1}k \dagger q-1$ then $\G(k,q)$ is connected by \eqref{eq: C1C2} (see \eqref{eq: prim div}--\eqref{eq: ord p}). By Proposition~\ref{prop caract. GP bip} the GP-graph $\G(k,q)$ is non-bipartite except for $\G(1,2)=K_2$. 
	
Let $n=\frac{q-1}k$. If $n$ is not a primitive divisor of $q-1$ then $\G(k,q)$ is not connected. Also, if $a$ is the minimal positive integer such that $n \mid p^a-1$, i.e.\@ $a=ord_n(p)$, then by Theorem~\ref{thm: components} we know that $\G(k,q)$ has $p^{m-a}$ connected components, all isomorphic to $\G(k_a,p^a)$ with $k_a = \frac{p^{a}-1}n$. 
	
Assume that $\G(k,q)$ is bipartite.
A disjoint union of graphs is bipartite if and only if each component is bipartite. 
Thus, $\G(k_a,p^a)$ is connected and bipartite and hence by Proposition~\ref{prop caract. GP bip} we must have that 					
	$$\G(k_a,p^a)=\G(1,2)=K_2.$$ 
Hence, $p=2$, $a=1$ and $k_a=1$. 
In this way $q=2^m$ and, since $k_a=\frac 1n$, we have that $1=n=\frac{2^m-1}k$ from where we deduce that $k=2^m-1$,
and the result follows. 
\end{proof}

Thus, by \eqref{eq: C1C4}, any GP-graph $\G=\G(k,q)$, which is not of the form $\G_m=\G(2^m-1,2^m)$ for some $m \in\N$, contains some odd cycle, has asymmetric spectrum and its chromatic number satisfies $\chi(\G)\ge 3$. 

As a direct consequence of the previous theorem we get that, generically, the bipartite double of a connected GP-graph is connected.
\begin{coro} \label{coro: double}
Let $\G(k,q)$ be a GP-graph such that $(k,q)\ne (2^m-1,2^m)$ for $m \in \N$. If $\frac{q-1}k \dagger q-1$ then the bipartite double $B(\G(k,q))=\G(k,q) \times K_2$ is connected.
\end{coro}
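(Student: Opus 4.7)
The plan is a direct application of the tools already assembled in the paper, so the proof should be very short. The Brualdi--Harary--Miller theorem, stated in the introduction, asserts that for any (di)graph $X$, the bipartite double $B(X)=X\times K_2$ is connected if and only if $X$ is both connected and non-bipartite. Thus I only need to verify these two hypotheses for $\G=\G(k,q)$ under the stated assumptions.

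First, I will invoke condition (C1) from \eqref{eq: C1C2}: the assumption $\tfrac{q-1}{k}\dagger q-1$ is precisely the statement $\mathrm{ord}_n(p)=m$ with $n=\tfrac{q-1}{k}$, which guarantees that $\G(k,q)$ is (strongly) connected. Second, I will apply Theorem~\ref{teo: GPbipartito}, which classifies all bipartite GP-graphs as exactly the family $\G(2^m-1,2^m)\simeq 2^{m-1}K_2$ for $m\in\N$. Since we are assuming $(k,q)\ne(2^m-1,2^m)$, the graph $\G(k,q)$ lies outside this exceptional family and is therefore non-bipartite. (Note, incidentally, that among the exceptional graphs only $(k,q)=(1,2)$ is connected, so the exclusion is equivalent here to ruling out $\G(1,2)=K_2$; it is convenient nonetheless to state the corollary in the uniform form above.)

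Combining these two facts with Brualdi--Harary--Miller yields the connectedness of $B(\G(k,q))$ immediately. Since everything follows by directly quoting results that appear earlier in the excerpt, there is no genuine obstacle in the argument; the only point requiring a sentence of care is the observation that the hypotheses of the corollary indeed force $\G(k,q)$ into the ``connected and non-bipartite'' case, rather than the degenerate bipartite exception $K_2$, and this is handled cleanly by pairing (C1) with Theorem~\ref{teo: GPbipartito}.
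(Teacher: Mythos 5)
Your proposal is correct and follows essentially the same route as the paper: quote the Brualdi--Harary--Miller criterion, get connectedness from the primitive-divisor hypothesis (C1), and get non-bipartiteness from Theorem~\ref{teo: GPbipartito} since $(k,q)\ne(2^m-1,2^m)$. The extra remark that only $(1,2)$ is connected among the exceptions is accurate but not needed.
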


\begin{proof}
By Theorem 3.4 in \cite{BHM}, the bipartite double $B(X)=X \times K_2$ of a graph $X$ is connected if and only if $X$ is connected and non-bipartite. 
We know that $\G(k,q)$ is connected if and only if $\frac{q-1}k$ is a primitive divisor of $q-1$. 
By Theorem \ref{teo: GPbipartito}, $\G(k,q)$ is non-bipartite for any $(k,q)\ne (2^m-1,2^m)$ with $m\in \N$. Hence, $\G(k,q) \times K_2$ is connected for $(k,q)$ under the hypothesis.
\end{proof}

This completely characterizes the connected bipartite doubles of GP-graphs. 

\begin{exam} \label{exam: semip}
If $(k,q)$ is a semiprimitive pair, i.e.\@ $-1$ is a power of $p$ modulo $k$, then $\G(k,q)$ is a strongly regular graph with 3 different eigenvalues (see Definition 5.1 and Theorems~5.4 and 5.8 in \cite{PV3}). Hence, $\G(k,q)$ is connected and therefore the double $B(\G(k,q))$ is connected. 
For instance $(p^t+1,p^m)$ with $t\mid m$ and $\frac mt$ even (and $t\ne \frac m2$) is semiprimitive. In particular, for $t=1$ and $p=2$ with $m$ even, $(3,2^m)$ is a semiprimitive pair and 
	$B(\G(3,2^{m})) = \G(3,2^{m}) \times K_2$ 
is connected for every $m$ even.  
\hfill $\lozenge$
\end{exam}

To conclude the paper we look at the first binary GP-graphs, that is $\G(k,2^m)$ with $k\mid 2^m-1$ for the smallest values of $m$.

\begin{exam}[$p=2$, $1\le m\le 8$] \label{exam: GP2s}
We recall that binary GP-graphs are always undirected (see \eqref{eq: C1C2}). We give the graphs 
and leave some details to the reader. In all the cases, the decomposition $\G(2^m-1,2^{m})=2^{m-1}K_2$ follows from Theorem \ref{teo: GPbipartito}.

We have $\G(1,2)=K_2$ for $m=1$; $\G(1,4)=K_4$ and $\G(3,4) \simeq K_2 \sqcup K_2$ for $m=2$; and 
$\G(1,8)=K_8$ and $\G(7,8) \simeq K_2 \sqcup K_2 \sqcup K_2  \sqcup K_2$ for $m=3$.

For $m=4$, we have four GP-graphs: 
\begin{align*}
	\G(1,16)  & = K_{16}, \\ 
	\G(3,16)  & = srg(16,5,0,2) = \text{Clebsch graph}, \\ 
	\G(5,16)  & \simeq K_4 \sqcup K_4 \sqcup K_4 \sqcup K_4, \\ 
	\G(15,16) & = K_2 \sqcup K_2 \sqcup K_2 \sqcup K_2 \sqcup K_2 \sqcup K_2 \sqcup K_2 \sqcup K_2. 
\end{align*}
Note that $\G(3,16)=\G(2^1+1,2^4)$ 
belongs to the family defined in \eqref{eq: GPqml} of Example~\ref{ex: qml}, and hence it is a strongly regular graph. It is the Clebsch graph, having girth $g=4$ and chromatic number $\chi=4$. It is also Ramanujan (see Section 8 in \cite{PV4b}). 
There are 2{.}585{.}136{.}675 connected 5-regular graphs of 16 vertices (see Markus Meringer's web page \cite{MMWP}). 
Notably, out of them, there are only 4 which are strongly regular, the Clebsch graph, the Shrikhande graph with parameters $srg(16,6,2,2)$, and their complements. 

Consider 
	$\ff_{16}=\ff_{2}[x]/(x^{4}+x+1)$
and let $\alpha$ be a root of the irreducible polynomial $x^4+x+1$, that is $\alpha^4=\alpha+1$. The cubes in $\ff_{16}$ are 
	$$R_3 = \{1,\alpha^{3}, \alpha^{3}+\alpha,\alpha^{3}+\alpha^2, \alpha^{3}+\alpha^{2}+\alpha+1\}$$
and hence we can depict the Clebsch graph as the GP-graph $\G(3,16)$ as follows:

\begin{center}
\begin{figure}
\begin{tikzpicture}[scale=.875, auto, thick, 
	main node/.style={fill=black, circle, inner sep=2pt}, 
	label distance=2mm] 

	\foreach \i/\name/\pos in {
		0/$0$/right, 
		1/$\alpha^{3}+\alpha^{2}$/ right, 
		2/$\alpha^{2}+\alpha+1$/ right, 
		3/$\alpha^{3}+\alpha^{2}+1$/above right, 
		4/$\alpha^{2}$/above, 
		5/$\alpha^{2}+1$/above left, 
		6/$1$/ left, 
		7/$\alpha^{3}+1$/left, 
		8/$\alpha^{3}+\alpha^{2}+\alpha+1$/left,
		9/$\alpha+1$/left,
		10/$\alpha^{3}$/left,
		11/$\alpha$/below left,
		12/$\alpha^{3}+\alpha+1$/below ,
		13/$\alpha^{3}+\alpha$/below right,
		14/$\alpha^{3}+\alpha^{2}+\alpha$/right,
		15/$\alpha^{2}+\alpha$/right}
	{
		\node[main node] (n\i) at ({360/16 * \i}:3cm) [label=\pos:\name] {}; 
	}
	
	
	
	\path[thick, draw=black] (n0) edge (n1); 
	\path[thick, draw=black] (n0) edge (n6); 
	\path[thick, draw=black] (n0) edge (n8); 
	\path[thick, draw=black] (n0) edge (n10); 
	\path[thick, draw=black] (n0) edge (n13); 

	\path[thick, draw=black] (n1) edge (n0); 
	\path[thick, draw=black] (n1) edge (n9); 
	\path[thick, draw=black] (n1) edge (n4); 
	\path[thick, draw=black] (n1) edge (n3); 
	\path[thick, draw=black] (n1) edge (n15); 
	
	\path[thick, draw=black] (n2) edge (n12); 
	\path[thick, draw=black] (n2) edge (n10); 
	\path[thick, draw=black] (n2) edge (n3); 
	\path[thick, draw=black] (n2) edge (n8); 
	\path[thick, draw=black] (n2) edge (n15); 

	\path[thick, draw=black] (n3) edge (n5); 
	\path[thick, draw=black] (n3) edge (n6); 
	\path[thick, draw=black] (n3) edge (n2); 
	\path[thick, draw=black] (n3) edge (n11); 
	\path[thick, draw=black] (n3) edge (n1); 
	
	\path[thick, draw=black] (n4) edge (n1); 
	\path[thick, draw=black] (n4) edge (n5); 
	\path[thick, draw=black] (n4) edge (n14); 
	\path[thick, draw=black] (n4) edge (n12); 
	\path[thick, draw=black] (n4) edge (n10); 
	
	\path[thick, draw=black] (n5) edge (n4); 
	\path[thick, draw=black] (n5) edge (n3); 
	\path[thick, draw=black] (n5) edge (n7); 
	\path[thick, draw=black] (n5) edge (n8); 
	\path[thick, draw=black] (n5) edge (n13); 

	\path[thick, draw=black] (n6) edge (n0); 
	\path[thick, draw=black] (n6) edge (n3); 
	\path[thick, draw=black] (n6) edge (n7); 
	\path[thick, draw=black] (n6) edge (n12); 
	\path[thick, draw=black] (n6) edge (n14); 
	
	\path[thick, draw=black] (n7) edge (n6); 
	\path[thick, draw=black] (n7) edge (n5); 
	\path[thick, draw=black] (n7) edge (n10); 
	\path[thick, draw=black] (n7) edge (n9); 
	\path[thick, draw=black] (n7) edge (n15); 
	
	\path[thick, draw=black] (n8) edge (n0); 
	\path[thick, draw=black] (n8) edge (n2); 
	\path[thick, draw=black] (n8) edge (n9); 
	\path[thick, draw=black] (n8) edge (n5); 
	\path[thick, draw=black] (n8) edge (n14); 
	
	\path[thick, draw=black] (n9) edge (n7); 
	\path[thick, draw=black] (n9) edge (n8); 
	\path[thick, draw=black] (n9) edge (n1); 
	\path[thick, draw=black] (n9) edge (n11); 
	\path[thick, draw=black] (n9) edge (n12); 
	
	\path[thick, draw=black] (n10) edge (n7); 
	\path[thick, draw=black] (n10) edge (n0); 
	\path[thick, draw=black] (n10) edge (n2); 
	\path[thick, draw=black] (n10) edge (n4); 
	\path[thick, draw=black] (n10) edge (n11); 
	
	\path[thick, draw=black] (n11) edge (n10); 
	\path[thick, draw=black] (n11) edge (n9); 
	\path[thick, draw=black] (n11) edge (n3); 
	\path[thick, draw=black] (n11) edge (n13); 
	\path[thick, draw=black] (n11) edge (n14); 
	
	\path[thick, draw=black] (n12) edge (n9); 
	\path[thick, draw=black] (n12) edge (n6); 
	\path[thick, draw=black] (n12) edge (n4); 
	\path[thick, draw=black] (n12) edge (n2); 
	\path[thick, draw=black] (n12) edge (n13); 
	
	\path[thick, draw=black] (n13) edge (n11); 
	\path[thick, draw=black] (n13) edge (n12); 
	\path[thick, draw=black] (n13) edge (n5); 
	\path[thick, draw=black] (n13) edge (n15); 
	\path[thick, draw=black] (n13) edge (n0); 
	
	\path[thick, draw=black] (n14) edge (n11); 
	\path[thick, draw=black] (n14) edge (n8); 
	\path[thick, draw=black] (n14) edge (n4); 
	\path[thick, draw=black] (n14) edge (n15); 
	\path[thick, draw=black] (n14) edge (n6); 
	
	\path[thick, draw=black] (n15) edge (n14); 
	\path[thick, draw=black] (n15) edge (n1); 
	\path[thick, draw=black] (n15) edge (n2); 
	\path[thick, draw=black] (n15) edge (n7); 
	\path[thick, draw=black] (n15) edge (n13); 
\end{tikzpicture}
\caption{The Clebsch graph $\G(3,16)$.}
\end{figure}
\end{center}

For $m=5$, there are only two binary GP-graphs: $\G(1,32)=K_{32}$ and $\G(31,32) \simeq 2^4K_2.$ 
The case $m=6$ is more interesting. The divisors of $2^6-1=63$ are $k=1,3,7,9,21,63$, and thus we have the six 
GP-graphs:
\begin{align*}
 \G(1,64) & = K_{64}, \\ 
 \G(3,64) & = \text{connected $21$-regular} = 
 srg(64,21,8,6), \\ 
 \G(7,64) & = \text{connected $9$-regular (not srg)}, \\ 
 \G(9,64) & = K_8 \sqcup \cdots \sqcup K_8 \quad (\text{$2^3$-times}), \\ 
 \G(21,64)& = K_4 \sqcup \cdots \sqcup K_4 \quad (\text{$2^4$-times}), \\ 
 \G(63,64)& = K_2 \sqcup \cdots \sqcup K_2 \quad (\text{$2^5$-times}). 
\end{align*}
Note that $\G(3,64)=\G(2^1+1,2^6)$ belongs to the family defined in \eqref{eq: GPqml} of Example \ref{ex: qml}, 
hence is a strongly regular graph. It is also Ramanujan (see Section 8 in \cite{PV4b}). On the other hand, 
	$\G(9,64)=\G(2^3+1,2^6)$
is as in \eqref{eq: q^mKq^m} of Remark \ref{rem: 1}. For $\G(21,64)$ we use Theorem~\ref{thm: components}.
Finally, $ \G(7,64)$ is not a strongly regular graph, since $srg(64,9,e,d)$ and \eqref{eq: srg cond} implies $e=2$ and $d=0$ and from the Brouwer's tables \cite{BWP} we see that $srg(64,9,2,0)$ do not exist.

For $m=7$, there are only two binary generalized Paley graphs: $\G(1,128)=K_{128}$ and $\G(127,128) \simeq 2^6 K_2$. 

Finally, for $m=8$, we have $2^8-1=255=3\cdot 5 \cdot 17$ and thus we have the eight
GP-graphs:
\begin{align*}
\G(1,256)  & = K_{256}, \\ 
\G(3,256)  & = \text{connected $85$-regular} = srg(256, 85, 24, 30), \\ 
\G(5,256)  & = \text{connected $51$-regular} = srg(256, 51, 2, 12), \\ 
\G(15,256) & = \text{connected $17$-regular (not srg)}, \\ 
\G(17,256) & = K_{16} \sqcup \cdots \sqcup K_{16} \quad (\text{$2^4$-times}), \\ 
\G(51,256) & = \G(3,16) \sqcup \cdots \sqcup \G(3,16) \quad (\text{ $2^4$-times}). \\
\G(85,256) & = K_4 \sqcup \cdots \sqcup K_4 \quad (\text{$2^6$-times}), \\ 
\G(255,256)& = K_2 \sqcup \cdots \sqcup K_2 \quad (\text{$2^7$-times}). 
\end{align*} 
Note that the graphs 
	$$\G(3,256)=\G(2^1+1,2^8) \qquad \text{and} \qquad \G(5,256)=\G(2^2+1,2^8)$$ 
belong to the family defined in \eqref{eq: GPqml} of Example \ref{ex: qml}, and hence they are strongly regular graphs. 
These graphs are also Ramanujan (see Section 8 in \cite{PV4b}).
On the other hand, 
	$$\G(17,256)=\G(2^4+1,2^8)$$ 
is as in \eqref{eq: q^mKq^m} of Remark \ref{rem: 1}.
For $\G(51,256)$ and $\G(85,256)$ we use Theorem~\ref{thm: components}.

Again, if $\G(15,256)$ were strongly regular with parameters $srg(256,17,e,d)$, condition \eqref{eq: srg cond} implies that  
$e=2$ and $d=0$, and there are no strongly regular graphs with these parameters (\cite{BWP}).

For the graph $\G(51,256)$ we use Theorem \ref{thm: components}. Notably, the graph $\G(51,256)$ is the first disconnected binary or ternary GP-graph which is not the union of cliques, Paley graphs or cycles, but the union of isomorphic copies of the Clebsch graph.
\hfill $\lozenge$
\end{exam}

From the previous examples, by Corollary \ref{coro: double}, we have that 
	$B(\G(3,16))$, $B(\G(3,64))$, $B(\G(7,64))$, $B(\G(3,256))$, $B(\G(3,256))$ and $B(\G(15,256))$
are all connected.

\begin{rem}
A Hamming graph $H(b,q)$ is a graph with vertex set $V=K^b$ where $K$ is any set of size $q$
and where two $b$-tuples form an edge if and only if they differ in exactly one coordinate.
Notice that $H(b,q)=\square^b K_{q}$ (Cartesian product) and hence, Hamming graphs have integral spectra. 
Those connected GP-graphs which are Hamming were classified by Lim and Praeger in \cite[Theorem 1.2]{LP}. In this case $k=\tfrac{q^b-1}{b(q-1)}$ for integers $b \mid \tfrac{q^{b}-1}{q-1}$, $q=p^{m}$ with $p$ prime, and 
\begin{equation} \label{eq: HammingGP}
	\G(\tfrac{q^{b}-1}{b(q-1)}, q^{b}) = H(b,q).
\end{equation}	
Clearly, $H(1,q)=K_q$ and $H(2,q)=L_{q,q}$.
Looking at the graphs from Examples~\ref{exam: GP3s}--\ref{exam: GP7s} and \ref{exam: GP2s} we see that the only non-trivial Hamming GP-graph (i.e.\@ those with $b\ge 3$) is 
	$$\G(7,64) = H(3,4) = K_4 \square K_4 \square K_4$$ 
(taking $p=2$, $m=2$ and $b=3$). Interestingly, note that taking $p=3$, $m=1$ and $b=4$ in \eqref{eq: HammingGP} 
we see that $\G(10,81)$ has the parameters of a Hamming graph, namely $H(4,3)$. 
However, $\G(10,81) \ne H(4,3)$ since $\G(10,81)$ is not connected.
\end{rem}

\section*{Final remarks}
In this paper we have used the connected component decomposition of GP-graphs $\G(k,q)$ over finite fields $\ff_q$ to characterize all bipartite GP-graphs.
This decomposition would be useful to study connectedness  and non-bipartiteness of other structural properties in certain products of GP-graphs over rings, namely 
	$$G_R(k)=Cay(R,\{x^k:x\in R^*\})$$
where $R$ is a finite commutative ring with identity.

In Examples \ref{exam: GP3s}--\ref{exam: GP7s} and \ref{exam: GP2s} we have studied the GP-graphs $\G(k,p^m)$ for $p=2,3,5,7$ and small values of $m$. In almost all cases we have identified the graph (if it is connected) or given the decomposition into connected components as a union of simpler known graphs. This allows one to compute invariants such as diameter (and hence Waring numbers), girth, chromatic number and the spectrum. The spectrum is well-known for cliques, Paley graphs (directed or not), rook's graph, cycles (directed or not), strongly regular graphs and Hamming graphs. Also, the spectrum of $\G(3,q)$ and $\G(4,q)$ was recently given in \cite{PV3}. 
This only leaves few graphs in the examples of this work for which we cannot compute the invariants or the spectrum with the actual techniques. For instance, we cannot compute the spectrum for only six graphs in the examples: 
$\G(8,81)$, $\G(16,81)$, $\G(8,25)$, $\G(6,49)$, $\G(12,49)$ 
and $\G(15,64)$, since they do not fall in the families with known spectrum.

Also, in a future work, we will use this decomposition to characterize GP-graphs $\G(k,p^m)$ which are built of copies of smaller GP-graphs $\G(k_a,p^a)$, i.e.\@ with $k_a \mid k$ and $a<m$, belonging to particular families such as those with $k$ small, or semiprimitive GP-graphs, Hamming GP-graphs, Cartesian decomposable GP-graphs, etc. This will allow us to give the spectrum in all the cases and the automorphism groups and some invariants in many cases.

\end{document}